\documentclass[11pt]{article}

\usepackage{ulem}
\usepackage{amssymb,amsbsy,amsmath,amsfonts,amssymb,amscd,amsthm}
\usepackage{xcolor}
\usepackage{graphicx} 
\usepackage{multicol}
\graphicspath{{./}{figures/}} 
\usepackage{tikz}
\usetikzlibrary{matrix}

\newcommand{\norm}[1]{\|#1 \|}

\newcommand{\R}{\mathbb{R}}

\newcommand{\N}{\mathbb{N}}
\newcommand{\T}{{\mathbb{T}^d}}

\newcommand{\mP}{\mathcal P}

\renewcommand{\H}{\mathcal{H}}

\newcommand{\beq}{\begin{equation}}
\newcommand{\eeq}{\end{equation}}
\def\a{\alpha}

\def\g{\gamma}
\def\l{\lambda}
\def\L{\Lambda}

\def\s{\sigma}

\def\th{\theta}

\newcommand{\cF}{{\cal F}}

\newcommand{\cH}{{\cal H}}

\newcommand{\cP}{{\cal P}}


%

\newcommand{\diver}{{\rm div}}
\def\pd{\partial}
\def\half{\frac{1}{2}}
\newtheorem{theorem}{Theorem}[section]
\newtheorem{lemma}[theorem]{Lemma}

\newtheorem{corollary}[theorem]{Corollary}
\newtheorem{remark}[theorem]{Remark}

\numberwithin{equation}{section}


\title{Rates of convergence for the policy iteration method for   Mean Field Games systems}

\author{Fabio Camilli \and Qing Tang}
\date{\today} 
\begin{document}
\maketitle
\begin{abstract}
Convergence of the policy iteration method for discrete and continuous optimal control problems 
holds under general assumptions.  Moreover, in some circumstances, it is also possible to show  a quadratic rate of convergence for the algorithm. For Mean Field Games, convergence of the policy iteration method has been recently proved in \cite{ccg}. Here, we provide an estimate of its rate of convergence.
\end{abstract}
\noindent
{\footnotesize \textbf{AMS-Subject Classification:} 49N80; 35Q89; 91A16; 65N12}.\\
{\footnotesize \textbf{Keywords:} Mean Field Games, policy iteration, convergence rate}.

\section{Introduction}
The policy iteration method, introduced by Bellman \cite{b} and Howard \cite{h}, is a general procedure to solve the  Hamilton-Jacobi-Bellman (HJB in short) equation, a nonlinear equation arising in discrete and continuous optimal control problems. The solution of  the  nonlinear HJB equation, which it is  well known to suffer from   the so-called ``curse of dimensionality" (see \cite{b}), is replaced  by the solution of a sequence of linear problems, coupled at each step with an optimization problem for  the updating of the new policy.
It has been proved that, under general assumptions, the algorithm converges  to the solution of the original problem (see \cite{alla,fl,pu1,pb,santos}); moreover, in some cases, it is possible to show  a (local) quadratic rate of convergence of the method which explains its very rapid convergence   to the solution of the original problem  (see \cite{bmz,kss}).\par
Mean Field Games (MFG in short) theory has been introduced in \cite{hcm,ll} to characterize Nash equilibria for differential games involving a large (infinite) number of agents. The corresponding mathematical formulation
leads to the study of a system of PDEs, composed by a HJB equation, characterizing the value function
and the optimal control for the agents; a Fokker-Planck (FP in short) equation, governing the distribution of the   population when the agents behaves in an optimal way. In the case of a finite horizon problem with periodic boundary conditions, the MFG system reads as 
\begin{equation}\label{MFG}
\begin{cases}
-\partial_tu-\Delta u+H(Du)=\sigma F[m(t)](x) & \text{ in }Q\\
\partial_tm -\Delta  m-\diver(mH_p(Du))=0 & \text{ in }Q\\
m(x,0)=m_0(x),\, u(x,T)=u_T(x) & \text{ in }\T\ ,
\end{cases}
\end{equation}
where $Q:=\T\times[0,T]$, $\T$ stands for the flat torus $\R^d / \mathbb{Z}^d$,   $H$ is a convex Hamiltonian and $\sigma$ is a positive constant.
The difficulty in solving the previous system can prevent the concrete application of the MFG model to real life problems. Indeed, system \eqref{MFG} not only involves the resolution of a HJB equation, but  the two equations are strongly coupled  and the system   has a forward-backward structure which does not permit to solve the two equations in parallel. For this reason, strategies other than the simple discretization of \eqref{MFG} must be implemented
(see for example \cite{bks,ch}).\par
In \cite{ccg},  the following version   of the policy iteration method for the MFG system \eqref{MFG} was considered:\par

\textbf{Policy iteration algorithm:} Fixed $R>0$ and given a bounded, measurable vector field   $q^{(0)}:\T\times [0,T]\to\R^d$  with $\|q^{(0)}\|_{L^\infty(Q)}\le R$,  iterate
\begin{itemize}
	\item[\textbf{(i)}] Solve
	\begin{equation*}
		\left\{
		\begin{array}{ll}
			\pd_t m^{(n)}-\Delta m^{(n)}-\diver (m^{(n)} q^{(n)})=0,\quad &\text{ in }Q\\
			m^{(n)}(x,0)=m_0(x)&\text{ in }\T.
		\end{array}
		\right.
	\end{equation*}
	\item[\textbf{(ii)}] Solve
	\begin{equation*}
		\left\{
		\begin{array}{ll}
			-\pd_t u^{(n)}- \Delta u^{(n)}+q^{(n)} Du^{(n)}-L(q^{(n)})=\sigma F[m^{(n)}(t)](x)&\text{ in }Q\\
			u^{(n)}(x,T)=u_T(x)&\text{ in }\T.
		\end{array}
		\right.
	\end{equation*}
	\item[\textbf{(iii)}] Update the policy
	\begin{equation*}
		q^{(n+1)}(x,t)={\arg\max}_{|q|\le R}\left\{q\cdot Du^{(n)}(x,t)-L(q)\right\}\quad\text{ in }Q.
	\end{equation*}
\end{itemize}
Here     $L(q)=\sup_{p\in\R^d}\{p\cdot q-H(p)\}$ is the   Legendre transform of $H$. 
Compared to the algorithm for the HJB equation alone, each iteration of the previous method also includes   the resolution of the FP equation. The main advantage of the method, in addition to what has already been observed previously for the HJB equation, is that at each iteration the linear HJB and FP equations are completely decoupled and can be quickly solved with different numerical methods. In \cite{ccg}, it was    proved    that   the previous algorithm converges to a solution $(u,m)$ of the MFG system if the Hamiltonian $H$ is Lipschitz continuous or if $H (p)\simeq |p|^\g$, $\g>1$.\par

In this paper, we study   rates of convergence for the MFG policy iteration method. We obtain, via purely PDE methods, a linear rate of convergence for the solution to the HJB equation in MFG system. More precisely, the error between two successive iterations of the sequence  $\|u^{(n)}\|_{B_1}+\s \|m^{(n)}\|_{B_2}$ generated by the algorithm improves linearly with respect to the error of the previous iterations, for sufficiently large $n$ and small $\s$, where $\|\cdot \|_{B_i}$, $i=1,2$,  denotes some Banach space norm which will be specified later. Without the coupling, for the HJB equation, the policy iteration method  is  equivalent to the Newton's method (see \cite{pb} and also Section \ref{sec:Newton_MFG}) and therefore gives a (local) quadratic improvement with respect to the error $|u^{(n)}-u|$ at each step. However, since the policy $q^{(n)}=H_p (Du^{(n-1)})$ enters as a velocity field in the FP equation, its improvement can correspond at most to a linear one   for the distribution error $|m^{(n)}-m|$. This also reflects on the HJB equation due to the coupling term $F[m]$ on the right side of this equation.  This point is further explained by   an interpretation of the policy iteration method  for the MFG system   as a quasi-Newton's method obtained, in order the eliminate the coupling among the equations, by suppressing off-diagonal elements in Jacobian of the map   of which we are calculating the roots.\par
Despite the previous limitations, however, the policy iteration method retains the advantage of replacing the resolution of a strongly coupled nonlinear system with a sequence of decoupled linear problems. Moreover, in a neighbourhood of the solution, the rapid convergence of the value function  $u^{(n)} $ is also reflected in an equally rapid convergence of the distribution $m^{(n)} $  (see \cite{ccg}  and \cite{lst} for some numerical simulations). 
\par
In this paper we restrict the discussion to MFGs with separable Hamiltonians. Recently in \cite{lst} the authors considered the convergence rate of policy iteration algorithms for MFGs with non-separable Hamiltonians using contraction fixed point method. The key difference is that here we do not impose the short time assumption, which is essential for the reasoning in \cite{lst}. \par

The paper is organized as follows. In Section \ref{sec:prelim}, we introduce some notations and recall the convergence result in \cite{ccg}. In Section \ref{sec:estimate} and \ref{sec:stat}, we prove the convergence rate for the  MFG policy iteration method for the evolutive problem and, respectively, for the stationary one. In Section \ref{sec:Newton_MFG}, we provide the  interpretation of the policy iteration algorithm as a quasi-Newton method.


\section{The policy iteration method for the Mean Field Games system}\label{sec:prelim}
In this section, we recall some results about the convergence of the policy iteration method for the MFG system.\par
Throughout the paper we work with maps which are periodic in space, i.e. on the torus $\T$. This simplification allows us to ignore problems related to boundary conditions or growth conditions on the data. The main ideas of this paper can be extended to consider models in $\R^d$ . We denote  by $L^r(\T)$, $1\le r\le\infty$, the set of $r$ summable periodic functions and by $W^{k,r}(\T)$, $k\in\N$ and $1\le r \le\infty$, the Sobolev space of periodic functions having $r$-summable weak derivatives up to order $k$. For any $r\geq1$, we denote by $W^{2,1}_r(Q)$ the space of functions $f$ such that $\partial_t^{\delta}D^{\beta}_x f\in L^r(Q)$ for all multi-indices $\beta$ and $\delta$ such that $|\beta|+2\delta\leq  2$. All these spaces  are endowed with the corresponding standard norm. \\
Defined $W^{1,0}_s(Q)$ as the space of functions such that the   norm
\[
\norm{u}_{W^{1,0}_s(Q)}:=\norm{u}_{L^s(Q)}+\sum_{|\beta|=1}\norm{D_x^{\beta}u}_{L^s(Q)}
\]
is finite, we denote by $\cH_s^{1}(Q)$ the space of   functions $u\in W^{1,0}_s(Q)$ with $\partial_t u\in (W^{1,0}_{s'}(Q))'$, where $\frac 1 s+\frac 1 {s'}=1$, equipped with the natural norm
\begin{equation*}
	\norm{u}_{\mathcal{H}_s^{1}(Q)}:=\norm{u}_{W^{1,0}_s(Q)}
	+\norm{\partial_tu}_{(W^{1,0}_{s'}(Q))'}\ .
\end{equation*}
If   $s>d+2$, then   $\cH_s^1(Q)$ is continuously embedded onto $C^{\delta,\delta/2}(Q)$ for some $\delta\in (0,1)$, see \cite[Appendix A]{metafune_altri}.\\
We   describe the assumptions on the data of the problem. 
Concerning the Hamiltonian, we  consider  two different settings
\begin{itemize}
	\item[\textbf{(H1)}] $H$ is differentiable, convex and globally Lipschitz  continuous, i.e.  there exists  a constant $R_0>0$ such that
	\begin{equation*} 
		|D_pH(p)|\leq R_0\qquad\text{ for all }p\in\R^d\ .
	\end{equation*}
	\item[\textbf{(H2)}] $H$ is of the form
	\begin{equation*} 
		H(p)=|p|^\gamma, \qquad \gamma>1.
	\end{equation*}
\end{itemize}
Recall that
\begin{equation*}
	H(p) = p\cdot q - L(q) \quad \text{if and only if} \quad q = D_p H(p)\ .
\end{equation*}
Concerning the coupling cost, we assume that 
\begin{itemize}
	\item[\textbf{(F1)}] $F$ maps continuously $\cP(\T)$, the set of probability measure on $\T$, into a bounded subset of $C^{0,1}(\T)$. Moreover
	\[
		\int_{\T}(F[m_1] -F[m_2] )d(m_1-m_2)>0 \qquad \text{if $m_1\neq m_2$.}
	\]
	for $x_1,x_2\in\T$, $m_1,m_2\in\mP_1(\T)$.
\end{itemize}
Finally,  for the initial and terminal data, we suppose   that 
\begin{itemize}
	\item[\textbf{(I)}] 
	$u_T\in W^{2-\frac{2}{r},r}(\T)$, $r>d+2$;\\
	$m_0\in W^{1,s}(\T)$, $s>d+2$,  $m_0\ge 0$ and $\int_{\T}m_0(x)dx=1$.
\end{itemize}
	
In the following we recall some a priori estimate for the solution of the linear  equations involved in policy iteration method
(see \cite[Lemma 2.1 and 2.2]{ccg})
\begin{lemma}\label{lemma:linear_HJ}
	Given  $b\in L^\infty(Q;\R^d)$, $f\in L^r(Q)$ and $u_T\in W^{2-\frac{2}{r},r}(\T)$ for some $r>1$, then the problem
	\begin{align*}
		\begin{cases}
			-\partial_t u-\Delta u+b(x,t)   Du=f(x,t)&\text{ in }Q\\
			u(x,T)=u_T(x)&\text{ in }\T
		\end{cases}
	\end{align*}	
	admits a unique solution $u\in W^{2,1}_r(Q)$  and it holds
	\begin{equation*}
		\|u\|_{W^{2,1}_r(Q)}\leq C(\|f\|_{L^r(Q)}+\|u_T\|_{W^{2-\frac{2}{r},r}(\T)}),
	\end{equation*}
	where $C$ depends on the norm of the coefficients as well as on $r,d,T$. Furthermore, if $r>d+2$ we have $Du\in C^{\alpha,\alpha/2}$ for some $\alpha\in(0,1)$.	
\end{lemma}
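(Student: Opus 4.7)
The plan is to reduce to a forward-in-time equation, invoke classical maximal parabolic regularity, and absorb the drift as a lower-order perturbation.

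First I would perform the time reversal $v(x,t):=u(x,T-t)$, turning the backward problem into
\begin{equation*}
\partial_t v-\Delta v+\tilde b(x,t)\cdot Dv=\tilde f(x,t)\text{ in }Q,\qquad v(x,0)=u_T(x)\text{ in }\T,
\end{equation*}
with $\tilde b(x,t)=b(x,T-t)\in L^\infty(Q;\R^d)$ and $\tilde f(x,t)=f(x,T-t)\in L^r(Q)$. On the torus this eliminates any boundary-condition issue, so I can appeal to the standard $L^r$-maximal regularity for the heat operator: for $g\in L^r(Q)$ and $v_0\in W^{2-2/r,r}(\T)$ the unique solution of $\partial_t v-\Delta v=g$, $v(\cdot,0)=v_0$ belongs to $W^{2,1}_r(Q)$ and satisfies
\begin{equation*}
\|v\|_{W^{2,1}_r(Q)}\le C_0\bigl(\|g\|_{L^r(Q)}+\|v_0\|_{W^{2-2/r,r}(\T)}\bigr),
\end{equation*}
with $C_0=C_0(r,d,T)$; this is the classical Ladyzhenskaya–Solonnikov–Uraltseva theory (equivalently Lieberman's parabolic $L^p$-theory).

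Next I would set up a fixed-point / continuation argument to include the drift. Apply the above estimate with $g=\tilde f-\tilde b\cdot Dv$ to get
\begin{equation*}
\|v\|_{W^{2,1}_r(Q)}\le C_0\bigl(\|\tilde f\|_{L^r(Q)}+\|\tilde b\|_{L^\infty}\|Dv\|_{L^r(Q)}+\|u_T\|_{W^{2-2/r,r}(\T)}\bigr).
\end{equation*}
The first-order term is of lower order, and I would control $\|Dv\|_{L^r(Q_\tau)}$ on cylinders $Q_\tau=\T\times[0,\tau]$ by an interpolation of the form $\|Dv\|_{L^r(Q_\tau)}\le \varepsilon\|D^2v\|_{L^r(Q_\tau)}+C_\varepsilon\|v\|_{L^r(Q_\tau)}$, together with the elementary bound $\|v\|_{L^r(Q_\tau)}\le \tau^{1/r}\sup_{[0,\tau]}\|v(\cdot,s)\|_{L^r(\T)}\le C\tau^{1/r}\|v\|_{W^{2,1}_r(Q_\tau)}$. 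Choosing $\varepsilon$ small and $\tau$ small absorbs the drift term into the left-hand side, yielding the estimate on $Q_\tau$. Iterating over a finite number of time slabs $[0,\tau],[\tau,2\tau],\dots$ up to $T$ gives the global bound
\begin{equation*}
\|v\|_{W^{2,1}_r(Q)}\le C\bigl(\|\tilde f\|_{L^r(Q)}+\|u_T\|_{W^{2-2/r,r}(\T)}\bigr),
\end{equation*}
with $C=C(r,d,T,\|\tilde b\|_{L^\infty})$. Existence then follows by the method of continuity applied to the family of operators $-\partial_t-\Delta+\lambda\, b\cdot D$, $\lambda\in[0,1]$, which is closed thanks to the a priori estimate, and uniqueness is immediate by applying the same estimate to the difference of two solutions. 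Undoing the time reversal gives the stated bound for $u$.

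Finally, the regularity statement when $r>d+2$ follows from the continuous parabolic embedding $W^{2,1}_r(Q)\hookrightarrow C^{1+\alpha,(1+\alpha)/2}(Q)$ for some $\alpha=\alpha(r,d)\in(0,1)$, which in particular yields $Du\in C^{\alpha,\alpha/2}(Q)$. The only nontrivial step is the absorption of the drift: the main obstacle is to justify the interpolation inequality on the torus independently of $\tau$, but this is standard since the time slab enters only through the small-measure factor $\tau^{1/r}$ and the torus has no boundary.
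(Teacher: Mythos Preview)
Your argument is correct and follows the standard route: maximal $L^r$ regularity for the heat operator on the torus, absorption of the bounded drift as a lower-order perturbation via interpolation and a small-time-slab iteration, method of continuity for existence, and the parabolic Sobolev embedding $W^{2,1}_r(Q)\hookrightarrow C^{1+\alpha,(1+\alpha)/2}(Q)$ for $r>d+2$.

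There is, however, nothing to compare against: the paper does not prove this lemma but simply recalls it from \cite{ccg} (see the sentence preceding the lemma), which in turn relies on the classical theory in \cite{LSU}. Your sketch is exactly the kind of argument that underlies those references, and the embedding you invoke at the end is precisely \cite[Cor.~IV.9.1]{LSU}, which the paper itself cites elsewhere. So your proposal is a faithful expansion of what the paper takes for granted.
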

\begin{lemma}\label{lemma:linear_FP}
Given a bounded, measurable vector field $g:Q\to\R^d$  and  $m_0\in L^2(\T)$, $m_0\geq0$, then the problem
\[
	\left\{
	\begin{array}{ll}
		\pd_t m-  \Delta m-\diver (g(x,t)m)=0&\text{ in }Q,\\
		m(x,0)=m_0(x)&\text{ in }\T,
	\end{array}
	\right.
\]
has a unique  non negative  solution $m\in \H_2^1(Q)$. Furthermore, if $m_0\in L^s(\T)$, $s\in(1,\infty)$, then $m\in L^\infty(0,T;L^s(\T))\cap \H_2^1(Q)$ and, if $m_0\in W^{1,s}(\T)$, then
\begin{equation*}
	\|m\|_{\H_s^1(Q)}\le C
\end{equation*}
for some constant  $ C=C(\|g\|_{L^\infty(Q;\R^d)},\|m_0\|_{W^{1,s}(\T)})$.
\end{lemma}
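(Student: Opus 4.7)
The plan is to combine classical existence theory for linear parabolic equations in divergence form with energy-type a priori estimates tuned to the $L^s$ and $W^{1,s}$ regularity scale.

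\textbf{Step 1 (Existence/uniqueness in $\cH^1_2(Q)$).} I would set up the weak formulation in the Lions--Magenes framework, with bilinear form
\[
a(v,w)\;=\;\int_{\T}\bigl(Dv-g\,v\bigr)\cdot Dw\,dx .
\]
Boundedness of $g$ and Young's inequality give
\[
a(v,v)\;\ge\;\tfrac12\|Dv\|_{L^2}^2 - C\|v\|_{L^2}^2,
\]
i.e. coercivity up to a lower order term. After the change of unknown $\widetilde m = e^{-\lambda t}m$ with $\lambda$ large enough, the lower-order correction disappears and existence/uniqueness of $m\in\cH^1_2(Q)$ follows from the standard theorem for parabolic equations with bounded measurable coefficients in divergence form.

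\textbf{Step 2 (Non-negativity).} Test the equation against $m^-:=\min(m,0)\in L^2(0,T;H^1(\T))$ (approximating by smoothing if needed). The drift term produces
\[
-\int g\,m^-\cdot Dm^-\,dx,
\]
which is absorbed by Young's inequality into $\tfrac12\|Dm^-\|_{L^2}^2$ at the cost of $C\|m^-\|_{L^2}^2$. Since $m^-(\cdot,0)=0$ by the sign assumption on $m_0$, Gronwall's inequality forces $m^-\equiv 0$.

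\textbf{Step 3 ($L^\infty_t L^s_x$ bound).} For $m_0\in L^s(\T)$, multiply the equation by $m^{s-1}$ (first for $s\ge 2$ using the already obtained $\cH^1_2$ regularity, then for $1<s<2$ by regularizing $m$ with $(m+\e)^{s-1}-\e^{s-1}$ to avoid singularity at $m=0$, using the non-negativity from Step 2). Integration by parts gives
\[
\frac{1}{s}\frac{d}{dt}\!\int_{\T}\! m^s
+(s-1)\!\int_{\T}\! m^{s-2}|Dm|^2
=(s-1)\!\int_{\T}\! g\cdot Dm\, m^{s-1}.
\]
Cauchy--Schwarz applied to the right-hand side as $\bigl(m^{s/2-1}|Dm|\bigr)\cdot\bigl(m^{s/2}\bigr)$, followed by Young's inequality, absorbs the gradient term and yields $\frac{d}{dt}\|m\|_{L^s}^s\le C\|m\|_{L^s}^s$, and Gronwall concludes.

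\textbf{Step 4 ($\cH^1_s$ bound).} Rewrite the equation in divergence form as $\pd_t m-\Delta m = \diver(gm)$. By Step 3 and boundedness of $g$, the source satisfies $gm\in L^\infty(0,T;L^s(\T))\hookrightarrow L^s(Q)$, so $\diver(gm)\in (W^{1,0}_{s'}(Q))'$ with a norm controlled by $\|g\|_\infty$ and $\|m_0\|_{L^s}$. Combined with $m_0\in W^{1,s}(\T)$, the maximal parabolic regularity for the heat semigroup on this scale (treating $\diver(gm)$ as a distributional source in the proper dual space) delivers $\|m\|_{\cH^1_s(Q)}\le C$. The main obstacle I expect here is the technical passage through the duality $(W^{1,0}_{s'}(Q))'$: one must verify the abstract maximal regularity statement in exactly the form stated, and it is probably easier to regularize $g$ (say by mollification), apply the estimate to smooth approximate solutions where classical $W^{2,1}_s$ theory of Lemma~\ref{lemma:linear_HJ} (applied to the adjoint) is available, and then pass to the limit using the uniformity of the bound in the approximation parameter.
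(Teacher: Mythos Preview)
The paper does not actually prove this lemma: it is stated as a recalled result, with the proof deferred entirely to the reference \cite{ccg} (see the sentence ``In the following we recall some a priori estimate\ldots (see \cite[Lemma 2.1 and 2.2]{ccg})'' immediately preceding Lemmas~\ref{lemma:linear_HJ} and~\ref{lemma:linear_FP}). So there is no in-paper proof to compare against.

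Your sketch is nonetheless correct and follows the standard route. A few remarks. Your Step~3 computation is exactly the one the paper itself carries out later in the proof of Theorem~\ref{prop:est1} for the estimate \eqref{mCs}, there attributed to Krylov \cite{c1999}; so in spirit you are aligned with the techniques the authors rely on. In Step~4, the duality idea you mention at the end (obtain $\cH^1_s$ regularity for the divergence-form equation from $W^{2,1}_{s'}$ regularity for the adjoint non-divergence equation, i.e.\ Lemma~\ref{lemma:linear_HJ}) is indeed the cleanest way to make the argument rigorous, and it is how such estimates are typically obtained in the MFG literature; the abstract ``maximal regularity on the dual scale'' you invoke first is correct but harder to cite directly in this precise form. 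The regularization of $g$ is a sensible precaution but not strictly necessary once the duality argument is in place, since Lemma~\ref{lemma:linear_HJ} already allows merely bounded drift.
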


In \cite[Theorems 2.3 and 2.5]{ccg}, it has been proved the following convergence result for the policy iteration method.
\begin{theorem}\label{thm:policy_iteration}
		Let either   \textbf{(H1)} or  \textbf{(H2)},   \textbf{(F1)} and \textbf{(I)} be  in force. Then, for $R$ sufficiently large, the sequence $(u^{(n)},m^{(n)})$, generated by the policy iteration algorithm, converges    to the solution    $(u ,m)\in W^{2,1}_r(Q)\times   \H_s^{1}(Q)$   of \eqref{MFG}. 	
	\end{theorem}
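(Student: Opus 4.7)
The plan is to combine the a priori estimates of Lemma \ref{lemma:linear_HJ} and Lemma \ref{lemma:linear_FP} with a compactness argument and, at the end, a uniqueness argument from the monotonicity (F1). Concretely, I would first show that, iteration by iteration, all three sequences $q^{(n)}$, $m^{(n)}$, $u^{(n)}$ remain in a bounded set independent of $n$; then extract convergent subsequences; identify their limits as a solution of \eqref{MFG}; and finally promote subsequential convergence to full convergence via uniqueness.

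For the uniform bounds, the key observation is that step (iii) guarantees $\|q^{(n)}\|_{L^\infty(Q)}\le R$ for every $n$ by construction. Plugging this into step (i) and applying Lemma \ref{lemma:linear_FP} gives a bound on $\|m^{(n)}\|_{\mathcal{H}_s^{1}(Q)}$ that depends only on $R$ and $\|m_0\|_{W^{1,s}}$. By (F1), $F[m^{(n)}]$ is then uniformly bounded in $C^{0,1}(\T)$, hence in $L^r(Q)$. Because $L(q^{(n)})$ is also uniformly bounded (since $L$ is continuous and $|q^{(n)}|\le R$), Lemma \ref{lemma:linear_HJ} applied to step (ii) yields a uniform bound on $\|u^{(n)}\|_{W^{2,1}_r(Q)}$. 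Since $r>d+2$, this upgrades to a uniform bound on $Du^{(n)}$ in $C^{\alpha,\alpha/2}(Q)$ for some $\alpha\in(0,1)$.

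With these bounds in hand, I would extract a subsequence along which $u^{(n_k)}\to u$ in $C^{1,\alpha'}$, $m^{(n_k)}\to m$ strongly in $L^p(Q)$ (using compactness of $\H_s^{1}(Q)\hookrightarrow L^p(Q)$) and $q^{(n_k)}\to q$ in some weak sense with the $L^\infty$ bound preserved. Passing to the limit in the linear equations of steps (i)--(ii) is then straightforward because all nonlinearities are made of factors that converge strongly (coefficients and $Du^{(n_k)}$) times factors that converge weakly, while the update rule (iii) combined with uniform convergence of $Du^{(n_k)}$ and continuity of the argmax identifies $q(x,t)=\arg\max_{|q'|\le R}\{q'\cdot Du(x,t)-L(q')\}$. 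For $R$ chosen large enough, the constraint $|q'|\le R$ is inactive at the maximizer, so $q=H_p(Du)$ and $q\cdot Du-L(q)=H(Du)$: the limit $(u,m)$ solves \eqref{MFG}. The monotonicity hypothesis in (F1) yields uniqueness of the solution of \eqref{MFG}, which propagates subsequential convergence to convergence of the full sequence.

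The main obstacle will be justifying that $R$ can indeed be chosen so that the argmax constraint becomes inactive. Under (H1) this is essentially automatic since $|H_p|\le R_0$, so any $R\ge R_0$ works. Under (H2), however, $H_p(p)=\gamma|p|^{\gamma-2}p$ is unbounded and one needs a closed a priori $L^\infty$ bound on $Du^{(n)}$: by Lemma \ref{lemma:linear_HJ} such a bound depends on $R$ through the coefficient $q^{(n)}$ and the source $L(q^{(n)})\sim |q^{(n)}|^{\gamma/(\gamma-1)}$. Showing that this dependence admits a fixed $R$ for which $|H_p(Du^{(n)})|\le R$ holds uniformly in $n$ is the delicate point, and would likely require sharper parabolic estimates (e.g.\ Bernstein-type bounds for $Du^{(n)}$) exploiting the structure $H(p)=|p|^\gamma$ together with the boundedness of $F[m^{(n)}]$ and $u_T$.
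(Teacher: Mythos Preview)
The paper does not itself prove this theorem: it is quoted from \cite{ccg} (Theorems~2.3 and~2.5 there), and only the statement together with the explanatory Remark that follows is reproduced here. Your overall architecture --- uniform a~priori bounds from Lemmas~\ref{lemma:linear_HJ} and~\ref{lemma:linear_FP}, compactness, passage to the limit in the linear equations, identification of the limiting control via step~(iii), and then uniqueness from the monotonicity in \textbf{(F1)} to upgrade subsequential to full convergence --- is the natural strategy and is consistent with how the \textbf{(H1)} case is handled in \cite{ccg}.

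The one substantive point concerns the \textbf{(H2)} case, where you correctly flag the circularity: the $W^{2,1}_r$ bound on $u^{(n)}$ from Lemma~\ref{lemma:linear_HJ} depends on $\|q^{(n)}\|_{L^\infty}$ and on $\|L(q^{(n)})\|_{L^r}$, both controlled only by $R$, and one then needs $|H_p(Du^{(n)})|\le R$ to close the loop. You propose resolving this by a direct Bernstein-type estimate on the iterates $Du^{(n)}$. The route actually taken, as the Remark after the theorem explains, is different and avoids this circularity altogether: one first proves an a~priori bound $\|Du\|_{L^\infty}\le\overline R$ for the \emph{solution} $(u,m)$ of \eqref{MFG} (this is \cite[Lemma~2.4]{ccg}), then replaces $H$ by the globally Lipschitz truncation $H_{\overline R}$. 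Since $H_{\overline R}$ satisfies \textbf{(H1)}, the \textbf{(H1)} argument applies verbatim and the policy iteration with $R=\overline R$ converges to the unique solution of the truncated system~\eqref{MFGS}; because $(u,m)$ solves both \eqref{MFG} and \eqref{MFGS}, this is the desired limit. This trick sidesteps any need for uniform-in-$n$ gradient bounds on the iterates and is what makes the \textbf{(H2)} case go through cleanly.
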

\begin{remark}
If \textbf{(H1)} holds, one can write  $H(p)=\sup_{|q|\le R_0}\{p\cdot q-L(q)\}$. Therefore, in this case, it is sufficient to consider $R=R_0$ in the policy iteration algorithm to get a converging sequence to the solution of \eqref{MFG}. \\
If  \textbf{(H2)} holds  and $(u,m)$ is the solution of \eqref{MFG}, then there exists a constant $\overline R$ such that $\|Du(t)\|_{L^\infty(\T)}\le {\overline R}$ for any $t\in [0,T]$
(see \cite[Lemma 2.4]{ccg}). Then  one introduces the truncated Hamiltonian defined by
\begin{equation*}
	H_{\overline R}(p)=\begin{cases}
		|p|^\gamma&\text{ if }|p|< {\overline R}\ ,\\
		(1-\gamma){\overline R}^\gamma+\gamma {\overline R}^{\gamma-1}|p|&\text{ if }|p|\geq {\overline R}\ ,
	\end{cases}	
\end{equation*}
and the problem
\begin{equation}\label{MFGS}
	\begin{cases}
		-\partial_tu -\Delta u +H_{\overline R}(Du)=\s F[m(t)](x) & \text{ in }Q,\\
		\partial_tm  -\Delta  m -\diver(m D_pH_{\overline R}(Du))=0 & \text{ in }Q,\\
		m(x,0)=m_0(x),\, u(x,T)=u_T(x) & \text{ in }\T\ .
	\end{cases}
\end{equation}
Observe that a solution $(u,m)$ of \eqref{MFG} is also a solution of \eqref{MFGS} and  $H_{\overline R}$ is globally Lipschitz continuous. Since $H_{\overline R}$ satisfies assumption \textbf{(H1)}, the policy iteration method  with $R={\overline R}$  converges to the solution of \eqref{MFGS} and therefore also of \eqref{MFG}.\\
Note also that, by the  Sobolev  embedding of $W^{2,1}_r(Q)$ in $C^{1+\a,\frac{1+\a}{2}}(Q)$ for $r>d+2$ with
$$\|u\|_{C^{1+\a,(1+\a)/{2}}(Q)} \le C\|u\|_{W^{2,1}_r(Q)}$$
(see  \cite[Cor. IV.9.1]{LSU})
and since $q^{(n)}=H_p(Du^{(n-1)})$, it also follows the convergence of policy $q^{(n)}$ to the optimal control $q=H_p(Du)$
in $L^\infty(Q)$ for $n\to\infty$.
\end{remark}
\begin{remark}
	Sobolev regularities for solutions to the Fokker-Planck equation in the MFG system have been also considered in \cite[Section 10.3]{ben}.
\end{remark}
\section{A rate of convergence for the policy iteration method: the finite horizon problem}\label{sec:estimate}
In this section, we give an estimate of the rate of convergence for the policy iteration method.  We  replace 
assumption \textbf{(H1)} with
\begin{itemize}
	\item[\textbf{(H3)}]  $H$  is two times differentiable, satisfies \textbf{(H1)} and for any $S>0$, there  exists $C_S>0$ such that
	\begin{equation*} 
		H_{pp}(p)q\cdot q\le C_S|q|^2 \quad \text{for any    $|p|\le S$, $q\in\R^d$}.
	\end{equation*}
\end{itemize}
and \textbf{(F1)} with

\begin{itemize}
	\item[\textbf{(F2)}] 
	$F:\T\times L^s(\T)\to L^r(\T)$	 and for all $t\in (0,T)$,
	\begin{equation*} 
		\|F[m_1]-F[m_2]\|_{L^r(\T)}\le C_F  \|m_1 -m_2\|_{L^s(\T)},
	\end{equation*}
	for $r,s>d+2$ and all  $m_1,\,m_2\in \cH^1_s(Q)$.	
\end{itemize}


We prove an estimate for the rate of convergence for the policy iteration method. 
\begin{theorem}\label{prop:est1}
	Let either  \textbf{(H2)} or \textbf{(H3)},   \textbf{(F2)} and  \textbf{(I)} be  in force and $R$  as in Theorem \ref{thm:policy_iteration}. Then, there exists a constant $C$, depending only on the data of problem, such that, if $(u^{(n)}, m^{(n)})$ is the sequence generated by the policy iteration method, we have
	\begin{equation}\label{mCs}
		\begin{split}
			\|m^{(n+1)} - m^{(n)}\|_{C(0,T;L^s(\T))}&\le C \|q^{(n+1)}-q^{(n)}\|_{ L^\infty (Q)},
		\end{split}
	\end{equation}
	 \begin{equation}\label{mH12}
		\begin{split}
			\|m^{(n+1)} - m^{(n)}\|_{\H_2^1(Q)}&\le C \|q^{(n+1)}-q^{(n)}\|_{ L^\infty (Q)},
		\end{split}
	\end{equation}	 
	\begin{equation} \label{uW21r}
		\begin{split}
	 \|u^{(n+1)}-u^{(n)}\|_{W^{2,1}_r(Q)}
	 &\le C\big(\|u^{(n)}-u^{(n-1)}\|^2_{W^{2,1}_r(Q)}\\
	 &+ \sigma \|m^{(n+1)} - m^{(n)}\|_{C(0,T;L^s(\T))}\big).
	 \end{split}
	\end{equation}
\end{theorem}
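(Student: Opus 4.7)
The plan is to attack the three inequalities by subtracting consecutive linear equations of the algorithm and applying the a priori estimates of Lemma \ref{lemma:linear_HJ} and Lemma \ref{lemma:linear_FP}, treating the Fokker--Planck and Hamilton--Jacobi sides separately. Throughout one uses that Theorem \ref{thm:policy_iteration} (and its proof in \cite{ccg}) provides uniform-in-$n$ bounds on $\|u^{(n)}\|_{W^{2,1}_r(Q)}$, $\|m^{(n)}\|_{\cH^1_s(Q)}$ and $\|q^{(n)}\|_{L^\infty(Q)}$, so the constants appearing below do not degenerate with $n$.

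For \eqref{mCs}--\eqref{mH12}, set $w := m^{(n+1)} - m^{(n)}$. Subtracting the FP equations at steps $n+1$ and $n$ and rearranging,
\[
\partial_t w - \Delta w - \diver(w\, q^{(n+1)}) = \diver\!\bigl(m^{(n)}(q^{(n+1)} - q^{(n)})\bigr),\qquad w(\cdot,0)=0.
\]
The divergence-form source has its $L^s(Q)$ norm controlled by $\|m^{(n)}\|_{\cH^1_s(Q)}\,\|q^{(n+1)} - q^{(n)}\|_{L^\infty(Q)}$. Applying the linear parabolic estimates underlying Lemma \ref{lemma:linear_FP} to this inhomogeneous problem, together with the Sobolev embedding of $\cH^1_s(Q)$ into $C(0,T;L^s(\T))$ when $s>d+2$, yields both bounds with the same right-hand side.

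For \eqref{uW21r}, let $v := u^{(n+1)} - u^{(n)}$ and subtract the two HJB equations; writing $q^{(n+1)}\!\cdot\! Du^{(n+1)} - q^{(n)}\!\cdot\! Du^{(n)} = q^{(n+1)}\!\cdot\! Dv + (q^{(n+1)}-q^{(n)})\!\cdot\! Du^{(n)}$ gives
\[
-\partial_t v - \Delta v + q^{(n+1)}\!\cdot\! Dv = \sigma\bigl(F[m^{(n+1)}] - F[m^{(n)}]\bigr) + E_n,\qquad v(\cdot,T)=0,
\]
with $E_n := L(q^{(n+1)}) - L(q^{(n)}) - (q^{(n+1)}-q^{(n)})\!\cdot\! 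Du^{(n)}$. The crucial observation is that step (iii) gives $q^{(n+1)} = \arg\max_{|q|\le R}\{q\cdot Du^{(n)} - L(q)\}$, so for $R$ large enough this is an interior maximum and $Du^{(n)} = L_q(q^{(n+1)})$. Hence $E_n$ is exactly the first-order Taylor remainder of the convex $C^2$ function $L$ at $q^{(n+1)}$ evaluated at $q^{(n)}$, and therefore $|E_n|\le C|q^{(n+1)}-q^{(n)}|^2$. Using $q^{(n+1)} = H_p(Du^{(n)})$, $q^{(n)} = H_p(Du^{(n-1)})$ together with the local Lipschitz bound on $H_p$ coming from (H3) (or, under (H2), the truncated Hamiltonian $H_{\overline R}$ from the remark after Theorem \ref{thm:policy_iteration}, which is globally Lipschitz and $C^2$ on the region where all $Du^{(n)}$ lie), one obtains $|q^{(n+1)}-q^{(n)}|\le C|Du^{(n)}-Du^{(n-1)}|$.

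Finally, Lemma \ref{lemma:linear_HJ} applied to $v$ gives $\|v\|_{W^{2,1}_r(Q)}\le C\bigl(\sigma\|F[m^{(n+1)}]-F[m^{(n)}]\|_{L^r(Q)} + \|E_n\|_{L^r(Q)}\bigr)$. Assumption (F2) bounds the first term by $C\sigma\|m^{(n+1)}-m^{(n)}\|_{C(0,T;L^s(\T))}$. For the second, the Sobolev embedding $W^{2,1}_r(Q)\hookrightarrow C^{1+\alpha,(1+\alpha)/2}(Q)$ for $r>d+2$ gives $\|Du^{(n)}-Du^{(n-1)}\|_{L^\infty(Q)}\le C\|u^{(n)}-u^{(n-1)}\|_{W^{2,1}_r(Q)}$, and combined with the pointwise bound on $E_n$ this yields $\|E_n\|_{L^r(Q)}\le C\|u^{(n)}-u^{(n-1)}\|_{W^{2,1}_r(Q)}^2$, proving \eqref{uW21r}. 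The main technical obstacle is identifying and exploiting the quadratic nature of $E_n$: the policy iteration step is specifically designed so that $Du^{(n)} = L_q(q^{(n+1)})$ annihilates the linear part of the Taylor expansion, leaving only the $C^2$ Hessian remainder, and this is precisely the mechanism that gives the quadratic (Newton-type) improvement on the HJB side — even though the resulting overall rate is linear because of the $\sigma\|m^{(n+1)}-m^{(n)}\|$ coupling term.
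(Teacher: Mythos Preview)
Your overall architecture matches the paper's: subtract consecutive FP and HJB equations, estimate the FP difference by energy methods, and bound the HJB difference via Lemma \ref{lemma:linear_HJ} after isolating a quadratic remainder term. The two substantive points of divergence are worth flagging.

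For the FP estimates \eqref{mCs}--\eqref{mH12}, you appeal to ``the linear parabolic estimates underlying Lemma \ref{lemma:linear_FP}'' for an inhomogeneous equation with divergence-form source. Lemma \ref{lemma:linear_FP} as stated covers only the homogeneous problem, and the paper does not cite it here; instead it derives \eqref{mCs} directly by testing the equation for $M^{n+1}:=m^{(n+1)}-m^{(n)}$ against $s|M^{n+1}|^{s-2}M^{n+1}$ (a Krylov-type $L^s$ energy estimate) and then Gronwall, and obtains \eqref{mH12} by the standard $L^2$ energy estimate plus duality for $\partial_t M^{n+1}$. Your route via an $\cH^1_s$ bound and the embedding $\cH^1_s(Q)\hookrightarrow C(0,T;L^s(\T))$ is plausible, but it requires an inhomogeneous version of Lemma \ref{lemma:linear_FP} that you would have to prove; the paper's direct computation avoids this.

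The more delicate issue is your treatment of $E_n$. You argue that $Du^{(n)}=L_q(q^{(n+1)})$ and hence $E_n$ is the second-order Taylor remainder of $L$, giving $|E_n|\le C|q^{(n+1)}-q^{(n)}|^2$. This step requires $L$ to be $C^{1,1}$ (bounded Hessian) near the relevant points, which is \emph{not} implied by \textbf{(H3)}: that hypothesis bounds $H_{pp}$ from above but not from below, and by Legendre duality an upper bound on $H_{pp}$ corresponds to a \emph{lower} bound on $L_{qq}$, not an upper one. (Under \textbf{(H2)} with $\gamma>2$, $L(q)=c_\gamma|q|^{\gamma'}$ with $\gamma'<2$ is not $C^2$ at the origin either.) The paper avoids this by rewriting the same quantity as a Taylor remainder of $H$ rather than of $L$: using $L(q^{(n)})=q^{(n)}\cdot Du^{(n-1)}-H(Du^{(n-1)})$ and $L(q^{(n+1)})=q^{(n+1)}\cdot Du^{(n)}-H(Du^{(n)})$, one gets
\[
E_n=H(Du^{(n-1)})-H(Du^{(n)})+H_p(Du^{(n-1)})\cdot(Du^{(n)}-Du^{(n-1)}),
\]
which is $-\tfrac12 H_{pp}(\cdot)(Du^{(n)}-Du^{(n-1)})\cdot(Du^{(n)}-Du^{(n-1)})$ and is controlled directly by \textbf{(H3)} (or \textbf{(H2)}). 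This yields $|E_n|\le C|Du^{(n)}-Du^{(n-1)}|^2$ without ever differentiating $L$. Your final estimate is correct, but the justification should go through $H$, not $L$.
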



\begin{proof}
Along the proof, the constant $C$ can change from line to line, but it is always independent of $n$.\\
Set $M^{n+1}=m^{(n+1)}-m^{(n)}$. Then $M^{n+1}$ satisfies the equation
\begin{equation}\label{eq1}
	\pd_t M^{n+1}- \Delta M^{n+1}-\diver (q^{(n+1)} M^{n+1})=\diver ((q^{(n+1)}-q^{(n)}) m^n).
\end{equation}
with $M^{n+1}(0)=0$. \\
We first show \eqref{mCs}. The proof follows the techniques from \cite[Lemma 4.1]{c1999}.
We set  
$$
f_0=q^{(n+1)}M^{n+1}+(q^{(n+1)}-q^{(n)})m^{(n)},
$$
and, for any $\tau$ such that $0<\tau<t$, we notice
\begin{align*}
	&\frac{d}{d\tau}|M^{n+1}(   \tau)|^s\\
	=&s|M^{n+1}( \tau)|^{s-2}M^{n+1}(  \tau) \frac{\partial M^{n+1}(  \tau)}{\partial \tau}\\
	=&s|M^{n+1}(  \tau)|^{s-2}M^{n+1}(  \tau) \big(\Delta M^{n+1}( x,\tau)+\diver f_0(  \tau)\big).
\end{align*}
Integrating the previous relation in $\T$ and observing that 
\begin{align*}
	&\int_{\T}s|M^{n+1}( \tau)|^{s-2}M^{n+1}(   \tau) \Delta M^{n+1}(    \tau)dx\\
	&=-s(s-1)\int_{\T}|M^{n+1}(   \tau)|^{s-2}|DM^{n+1}(   \tau)|^2dx\\
	&=-s(s-1)\int_{\T}\big(|M^{n+1}(  \tau)|^{(s-2)/s}|DM^{n+1}(   \tau)|^{2/s}\big)^sdx,
\end{align*}
we get
\begin{equation}\label{id1}
	\begin{split}
		&\int_{\T}\frac{d}{d\tau}|M^{n+1}(   \tau)|^sdx\\
		&+s(s-1)\int_{\T}\big(|M^{n+1}(    \tau)|^{(s-2)/s}|DM^{n+1}(    \tau)|^{2/s}\big)^sdx\\
		&= s\int_{\T}|M^{n+1}(    \tau)|^{s-2}M^{n+1}(    \tau) \big(\diver f_0(    \tau)\big)dx\\
		&=-s(s-1)\int_{\T} |M^{n+1}(    \tau)|^{s-2}DM^{n+1}(    \tau)f_0(    \tau)dx\\
		&\leq s(s-1)\int_{\T} |M^{n+1}(    \tau)|^{s-2}||DM^{n+1}(    \tau)||f_0(    \tau)|dx.
	\end{split}
\end{equation}
By Young inequality  with $1/s+1/s'=1$, we have
\begin{equation*} 
	\begin{split}
	&|\int_{\T}|M^{n+1}(  \tau)|^{s-2}DM^{n+1}(  \tau)f_0(  \tau)dx|\\
	\leq &\int_{\T}\big(|M^{n+1}(  \tau)|^{(s-2)/2} |DM^{n+1}(  \tau)|\big)^{s'}|M^{n+1}(  \tau)|^{s'(s-2)/2}dx\\
	+&C\|f_0(  \tau)\|^s_{L^s(\T)}.
	\end{split}
\end{equation*} 
By 
$$
\frac{s'(s-2)}{2}\cdot \frac{1}{1-s'/2}=s
$$
and 
\begin{align*}
	&\big(|M^{n+1}(  \tau)|^{(s-2)/2} |DM^{n+1}(  \tau)|\big)^{s'}|M^{n+1}(  \tau)|^{s'(s-2)/2}\\
	\leq &\frac{1}{2}\big( \big(|M^{n+1}(  \tau)|^{(s-2)/2} |DM^{n+1}(  \tau)|\big)^{s'}\big)^{2/s'}+C\big(|M^{n+1}(  \tau)|^{s'(s-2)/2})^{\frac{1}{1-s'/2}},
\end{align*}
we estimate
\begin{align*}
	& \int_{\T}\big(|M^{n+1}(  \tau)|^{(s-2)/2} |DM^{n+1}(  \tau)|\big)^{s'}|M^{n+1}(  \tau)|^{s'(s-2)/2}dx\\
	\leq & \frac{1}{2}\| |M^{n+1}(  \tau)|^{(s-2)/s}|DM^{n+1}(  \tau)|^{2/s}\|^s_{L^s(\T)}+C\|M^{n+1}(  \tau)\|^s_{L^s(\T)}.
\end{align*}
Moreover
\begin{align*}
	&\|f_0(  \tau)\|^s_{L^s(\T)}\\
	\leq &2^{s-1}\big(\|q^{(n+1)}(  \tau)M^{n+1}(  \tau)\|^s_{L^s(\T)}+\|(q^{(n+1)}(  \tau)-q^{(n)}(  \tau))m^{(n)}(  \tau)\|^s_{L^s(\T)}\big)\\
	\leq &C\big(\|M^{n+1}(  \tau)\|^s_{L^s(\T)}+\|q^{(n+1)}(  \tau)-q^{(n)}(  \tau)\|^s_{L^\infty(\T)}\big).
\end{align*}
Replacing the previous estimate in \eqref{id1}, we obtain 
\begin{equation*}
	\begin{split}
		&\pd_\tau \|M^{n+1}(  \tau)\|^s_{L^s(\T)}+ \frac{1}{2}s(s-1)\| |M^{n+1}(  \tau)|^{(s-2)/s}|DM^{n+1}(  \tau)|^{2/s}\|^s_{L^s(\T)}\\
		\leq &C\big(\|M^{n+1}(   \tau)\|^s_{L^s(\T)}+\|q^{(n+1)}(   \tau)-q^{(n)}(   \tau)\|^s_{L^\infty(\T)}\big).
	\end{split}
\end{equation*}
and, using  Gronwall's inequality, we finally  get
$$
\sup_{0<t<T}\|M^{n+1}(  t)\|_{L^s(\T)}\leq C\|q^{(n+1)}-q^{(n)}\|_{L^\infty(Q)}.
$$
We then proceed to show \eqref{mH12}. Multiplying both sides of \eqref{eq1} by $M^{n+1}$ and integrating in $\T$, we get
 \begin{equation}\label{eq2}
		\begin{split}
	&\half \frac{d}{dt}\|M^{n+1}(t)\|^2_{L^2(\T)}+\int_\T|DM^{n+1}(x,t)|^2dx\\
	=&-\int_\T q^{(n+1)}(x,t)M^{n+1}(x,t)DM^{n+1}(x,t)dx\\
	&-\int_\T(q^{(n+1)}(x,t)-q^{(n)}(x,t)) m^{(n)} (x,t)DM^{n+1}(x,t)dx.
 \end{split}
	\end{equation}
Next we integrate both sides of equation \eqref{eq2} on the interval $[0,T]$, and get
 \begin{equation*}
		\begin{split}
	&\|M^{n+1}(T)\|^2_{L^2(\T)}-\|M^{n+1}(0)\|^2_{L^2(\T)}+\int_0^T\int_\T|DM^{n+1}|^2dxdt\\
	=&-\int_0^T\int_\T q^{(n+1)}M^{n+1}DM^{n+1}dxdt\\
	&-\int_0^T\int_\T(q^{(n+1)}-q^{(n)}) m^{(n)} DM^{n+1}dxdt,
 \end{split}
	\end{equation*}
from which, recalling that $M^{n+1}( 0)\equiv 0$, we get
 \begin{equation}\label{eq:est2}
		\begin{split}
	&\int_0^T\int_\T|DM^{n+1}|^2dxdt 
	\leq \int_0^T\int_\T |q^{(n+1)}M^{n+1}DM^{n+1}|dxdt\\
	&+\int_0^T\int_\T|(q^{(n+1)}-q^{(n)}) m^{(n)} DM^{n+1}|dxdt.
 \end{split}
	\end{equation}	
Using \eqref{mCs} and Young inequality, we  estimate
  \begin{equation}\label{eq:est3}
		\begin{split}
	&\int_0^T\int_\T |q^{(n+1)}M^{n+1}DM^{n+1}|dxdt\\
	\leq &\frac{1}{4}\int_0^T\int_\T|DM^{n+1}|^2dxdt+\frac{3}{4}\int_0^T\int_\T |q^{(n+1)}M^{n+1}|^2dxdt\\
	\leq &\frac{1}{4}\int_0^T\int_\T|DM^{n+1}|^2dxdt+C\|q^{(n+1)}-q^{(n)}\|^2_{L^{\infty}(Q)},
 \end{split}
	\end{equation}	
and
 \begin{equation}\label{eq:est4}
		\begin{split}
	&\int_0^T\int_\T|(q^{(n+1)}-q^{(n)}) m^{(n)} DM^{n+1}|dxdt\\
\leq &\frac{1}{4}\int_0^T\int_\T|DM^{n+1}|^2dxdt+\frac{3}{4}\int_0^T\int_\T |(q^{(n+1)}-q^{(n)})m^{(n)}|^2dxdt\\
	\leq &\frac{1}{4}\int_0^T\int_\T|DM^{n+1}|^2dxdt+C\|q^{(n+1)}-q^{(n)}\|^2_{L^{\infty}(Q)}.
 \end{split}
	\end{equation}	
Replacing \eqref{eq:est3} and  \eqref{eq:est4} in \eqref{eq:est2}
$$
\frac{1}{2}\int_0^T\int_\T|DM^{n+1}|^2dxdt \leq C\|q^{(n+1)}-q^{(n)}\|^2_{L^{\infty}(Q)},
$$
so that, with \eqref{mCs}, we get
\begin{equation}\label{stima1}
\|m^{(n+1)} - m^{(n)}\|_{W^{1,0}_2(Q)}\le C \|q^{(n+1)}-q^{(n)}\|_{L^\infty (Q)}.
\end{equation}

For any test function $\phi\in W^{1,0}_{2}(Q)$, we  multiply both sides of \eqref{eq1} by $\phi$ and integrate on $Q$ to obtain
\begin{equation}\label{eq:est7}
\begin{split}
	&\int_0^T\int_{\T}\pd_t M^{n+1}  \phi dxdt=- \int_0^T\int_{\T}D M^{n+1}\cdot D \phi dxdt\\
	&-\int_0^T\int_{\T}D \phi \cdot (q^{(n+1)} M^{n+1})dxdt-\int_0^T\int_{\T}D \phi \cdot ((q^{(n+1)}-q^{(n)}) m^n)dxdt.
 \end{split}
\end{equation}
We estimate the three terms on the right hand side of \eqref{eq:est7} by
\begin{align*}
& \int_0^T\int_{\T}D M^{n+1}\cdot D \phi dxdt\\
 \leq &\Big(\int_0^T\int_\T|DM^{n+1}|^2dxdt\Big)^{\frac{1}{2}} \Big(\int_0^T\int_\T|D\phi|^{2}dxdt\Big)^{\frac{1}{2}}\\
 \leq &C\|q^{(n+1)}-q^{(n)}\|_{L^{\infty}(Q)}\|D\phi\|_{L^2(0,T;L^2(\T))},
\end{align*}
\begin{align*}
& \int_0^T\int_{\T}D \phi \cdot (q^{(n+1)} M^{n+1}) dxdt\\
 \leq &R\Big(\int_0^T\int_\T|M^{n+1}|^2dxdt\Big)^{\frac{1}{2}} \Big(\int_0^T\int_\T|D\phi|^2dxdt\Big)^{\frac{1}{2}}\\
 \leq &C\|q^{(n+1)}-q^{(n)}\|_{L^{\infty}(Q)}\|D\phi\|_{L^2(0,T;L^2(\T))},
\end{align*} 
and
\begin{align*}
& \int_0^T\int_{\T}D \phi \cdot ((q^{(n+1)}-q^{(n)}) m^n) dxdt\\
 \leq &\|q^{(n+1)}-q^{(n)}\|_{L^{\infty}(Q)} \left(\int_0^T\int_\T|m^{n}|^2dxdt\right)^{\frac{1}{2}} \left(\int_0^T\int_\T|D\phi|^2dxdt\right)^{\frac{1}{2}}\\
 \leq &C\|q^{(n+1)}-q^{(n)}\|_{L^{\infty}(Q)}\|D\phi\|_{L^2(0,T;L^2(\T))}.
\end{align*}
Replacing in \eqref{eq:est7}, we obtain 
\begin{equation*}
\sup_{\phi \in W^{1,0}_2(Q)}\int_0^T\int_{\T}\pd_t M^{n+1}  \phi dxdt\leq C\|q^{(n+1)}-q^{(n)}\|_{L^{\infty}(Q)}\|\phi\|_{W^{1,0}_2(Q)},
\end{equation*}
i.e.
\begin{equation}\label{stima2}
	\|\pd_t M^{n+1}\|_{(W^{1,0}_2(Q))'}\leq C\|q^{(n+1)}-q^{(n)}\|_{L^\infty (Q)}.
\end{equation}
From  \eqref{stima1} and \eqref{stima2}, we get \eqref{mH12}.\\
\indent We now prove the estimate \eqref{uW21r} for the HJB equation. The function  $U^{n+1}=u^{(n+1)}-u^{(n)}$ satisfies the equation
\begin{equation*}
-\pd_t U^{n+1}- \Delta U^{n+1}+q^{(n+1)} DU^{n+1}=\cF(x,t)
\end{equation*}
with $U^{n+1}(x,T)=0$, where 
\begin{equation}\label{cF}
\cF(x,t)=\sigma F[m^{(n+1)}]-\sigma F[m^{(n)}]+q^{(n)}Du^{(n)}-L(q^{(n)})
-(q^{(n+1)}Du^{(n)}-L(q^{(n+1)})). 
\end{equation}
Hence, recalling that $ q^{(n+1)} =H_p(Du^{(n)})$ is bounded, we have the estimate (see Lemma \ref{lemma:linear_HJ})
\begin{equation}\label{eq:conv1}
\|U^{n+1}\|_{W^{2,1}_r(Q)}\le  \|\cF \|_{L^r(Q)}.
\end{equation}
To estimate $\|\cF\|_{L^r(Q)}$, first observe that, since
\begin{align*}
&q^{(n+1)}Du^{(n)}-L(q^{(n+1)})= \sup_{q\in\R^d}\left\{q\cdot Du^{(n)}-L(q)\right\}\\
&\ge q^{(n)}Du^{(n)}-L(q^{(n)}),
\end{align*}
then we have
\begin{equation}\label{eq:conv2}
\cF(x,t)\le \sigma( F[m^{(n+1)}](x)- F[m^{(n)}](x)).
\end{equation}
Moreover, by $q^{(n)}= H_p(Du^{(n-1)})$ and 
\[H(Du^{(n-1)})=q^{(n)}Du^{(n-1)}-L(q^{(n)}),\]
 we have
\begin{align*}
&q^{(n)}Du^{(n)}-L(q^{(n)})-(q^{(n+1)}Du^{(n)}-L(q^{(n+1)}))=\\
&q^{(n)}Du^{(n)}+H(Du^{(n-1)})-q^{(n)}Du^{(n-1)} -H(Du^{(n)})=\\
& H(Du^{(n-1)})- H(Du^{(n)})+H_p(Du^{(n-1)})(Du^{(n)}-Du^{(n-1)})=\\
& -\half H_{pp}(\th Du^{(n)}+(1-\th)Du^{(n-1)} )(Du^{(n)}-Du^{(n-1)})\cdot( Du^{(n)}-Du^{(n-1)})
\end{align*}
for some $\th\in (0,1)$. Therefore,  either by \textbf{(H3)} or by \textbf{(H2)}, recalling   that  $\|Du^{(n)}\|_\infty$ is  uniformly bounded in $n\in\N$, we get
\begin{equation}\label{eq:conv3}
\cF(x,t)\ge \sigma (F[m^{(n+1)}]- F[m^{(n)}])-C|Du^{(n)}(x,t)-Du^{(n-1)}(x,t)|^2.
\end{equation}
%
Recall that $r>d+2$. From  \eqref{eq:conv2}, \eqref{eq:conv3} and   Minkowski inequality we obtain
\begin{align*}
 \|\cF \|_{L^r(Q)}\le & C\|Du^{(n)}-Du^{(n-1)}\|^2_{L^{2r}(Q)}+\sigma \|F[m^{(n+1)}]-F[m^{(n)}]\|_{L^r(Q)}\\
\le & C \big( \|u^{(n)}-u^{(n-1)}\|^2_{W^{2,1}_r(Q)}+\sigma \|F[m^{(n+1)}]-F[m^{(n)}]\|_{L^r(Q)}\big)
\end{align*}
From \textbf{(F2)}, for each $t$  we have  
$$
\|F[m^{(n+1)}](t)-F[m^{(n)}](t)\|_{L^r(\T)}\le C \|m^{(n+1)}( t)-m^{(n)}( t)\|_{L^s(\T)},
$$
so that 
\begin{align*}
\|F[m^{(n+1)}]-F[m^{(n)}]\|_{L^r(Q)}\le & C\|F[m^{(n+1)}]-F[m^{(n)}]\|_{L^\infty(0,T;L^r(\T))}\\
\le &  C \|m^{(n+1)}-m^{(n)}\|_{C(0,T;L^s(\T))}.
\end{align*}
Then we can get \eqref{uW21r} from \eqref{eq:conv1}.

\end{proof}


 A key difficulty for estimating convergence rate using Theorem \ref{prop:est1} is that we cannot control the constants $C$ in \eqref{mH12} and \eqref{uW21r}. We do not have additional information other than they depend on the data of the problem and not on $n$. To address this difficulty, we introduce an additional assumption on the smallness of $\sigma$. It is not needed for  the convergence of the policy iteration method but allows us to get a linear convergence rate to the solution of MFG system in the policy iteration. This type of assumption also plays a key role in \cite{cg} for considering MFGs of aggregation.

\begin{corollary}\label{cor:rate}
Under the same assumptions of Theorem \ref{prop:est1}, there exist constants $\sigma_0>0$ and $0<C^*<1$, such that for sufficiently large $n$ and $\forall \sigma<\sigma_0$, 
 \begin{equation}\label{estimate_rate}
		\|u^{(n+1)}-u^{(n)}\|_{W^{2,1}_r(Q)}+ \s \|m^{(n+1)} - m^{(n)}\|_{C(0,T;L^s(\T))} \leq C^*\|u^{(n)}-u^{(n-1)}\|_{W^{2,1}_r(Q)}.
		\end{equation}
		
\end{corollary}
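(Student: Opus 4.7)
The plan is to chain the three inequalities in Theorem \ref{prop:est1} into a single recursive inequality on the scalar quantity $a_n := \|u^{(n)}-u^{(n-1)}\|_{W^{2,1}_r(Q)}$, and then to use smallness of both the residuals (from convergence) and of $\sigma$ to force the iteration to contract.

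First I would dispose of the gap between the right-hand side of \eqref{mCs}--\eqref{mH12}, which involves $\|q^{(n+1)}-q^{(n)}\|_{L^\infty(Q)}$, and $a_n$. Since $q^{(n+1)}=H_p(Du^{(n)})$ and $q^{(n)}=H_p(Du^{(n-1)})$, under either \textbf{(H2)} or \textbf{(H3)} the map $H_p$ is locally Lipschitz; combined with the uniform bound on $\|Du^{(n)}\|_\infty$ recalled in the remark after Theorem \ref{thm:policy_iteration}, this gives
\[
\|q^{(n+1)}-q^{(n)}\|_{L^\infty(Q)}\le C\,\|Du^{(n)}-Du^{(n-1)}\|_{L^\infty(Q)}.
\]
The Sobolev embedding $W^{2,1}_r(Q)\hookrightarrow C^{1+\alpha,(1+\alpha)/2}(Q)$ for $r>d+2$ (quoted in the second remark) then yields $\|Du^{(n)}-Du^{(n-1)}\|_{L^\infty(Q)}\le C\,a_n$, so that
\[
\|m^{(n+1)}-m^{(n)}\|_{C(0,T;L^s(\T))}\le C\,a_n.
\]

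Next I would insert this estimate into \eqref{uW21r}, producing the scalar recursion
\[
a_{n+1}\le C\,a_n^2+C\sigma\,a_n=a_n\bigl(C\,a_n+C\sigma\bigr),
\]
and simultaneously
\[
\sigma\,\|m^{(n+1)}-m^{(n)}\|_{C(0,T;L^s(\T))}\le C\sigma\,a_n.
\]
By Theorem \ref{thm:policy_iteration} the sequence $(u^{(n)},m^{(n)})$ converges in $W^{2,1}_r(Q)\times \H^1_s(Q)$, hence $a_n\to 0$. Therefore, given any $\eta\in(0,1)$, there exists $N=N(\eta)$ with $C\,a_n\le \eta/4$ for all $n\ge N$. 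Fixing $\sigma_0>0$ so that $C\sigma_0\le \eta/4$, we obtain $a_{n+1}\le (\eta/2)\,a_n$ for every $n\ge N$ and every $\sigma\le\sigma_0$, and consequently
\[
a_{n+1}+\sigma\,\|m^{(n+1)}-m^{(n)}\|_{C(0,T;L^s(\T))}\le \Bigl(\frac{\eta}{2}+C\sigma\Bigr)a_n\le \eta\,a_n,
\]
which is \eqref{estimate_rate} with $C^*=\eta<1$.

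The one point that deserves some care is the passage from $\|q^{(n+1)}-q^{(n)}\|_{L^\infty}$ to $\|u^{(n)}-u^{(n-1)}\|_{W^{2,1}_r}$: under \textbf{(H2)} the map $H_p(p)=\gamma|p|^{\gamma-2}p$ is only locally Lipschitz (and not even $C^1$ at the origin when $\gamma<2$), so one must invoke the uniform bound on $\|Du^{(n)}\|_\infty$ and in the case $\gamma<2$ argue with the truncated Hamiltonian $H_{\overline R}$ as in the remark after Theorem \ref{thm:policy_iteration}, for which $H_{\overline R,p}$ is globally Lipschitz. Aside from this, the argument is a direct combination of the three inequalities of Theorem \ref{prop:est1} with the qualitative convergence already proved in \cite{ccg}, and the smallness of $\sigma$ is what allows the linear (non-contractive a priori) coupling term to be absorbed.
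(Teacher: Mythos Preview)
Your proof mirrors the paper's exactly: pass from $\|q^{(n+1)}-q^{(n)}\|_{L^\infty}$ to $a_n$ via the Sobolev embedding $W^{2,1}_r\hookrightarrow C^{1+\alpha,(1+\alpha)/2}$, feed this into \eqref{mCs} and \eqref{uW21r}, and then use $a_n\to 0$ together with smallness of $\sigma$ to force a contraction. One correction to your closing caveat: since $H_{\overline R}$ agrees with $|p|^\gamma$ on $\{|p|<\overline R\}$, its gradient is still only $(\gamma-1)$-H\"older near the origin when $1<\gamma<2$, so the truncation does \emph{not} make $H_{\overline R,p}$ globally Lipschitz; the paper's own proof simply asserts the Lipschitz bound on $q^{(n+1)}-q^{(n)}$ without further comment, so the subtlety you correctly flag is present there as well rather than being resolved by the truncation.
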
 
\begin{proof}
	First note that, by parabolic Sobolev embedding theorem (\cite{LSU}, Corollary IV.9.1 p.342) and the fact that $r>d+2$, we have 
\begin{equation*}
\|q^{(n+1)}-q^{(n)}\|_{L^\infty(Q)}\leq C\|u^{(n)}-u^{(n-1)}\|_{W^{2,1}_r(Q)}.
\end{equation*}	
	By \eqref{uW21r} and \eqref{mH12}  we have
\begin{align*}
& \|m^{(n+1)} - m^{(n)}\|_{C(0,T;L^s(\T))}  
\le C\|q^{(n+1)}-q^{(n)}\|_{L^{\infty}(Q)}\\
&\le C_1\|u^{(n)}-u^{(n-1)}\|_{W^{2,1}_r(Q)}.
\end{align*}
From \eqref{uW21r} we have
\begin{equation*}
		\begin{split}
	 &\|u^{(n+1)}-u^{(n)}\|_{W^{2,1}_r(Q)}
	 \le C_2\big(\|u^{(n)}-u^{(n-1)}\|^2_{W^{2,1}_r(Q)}\\
	 &+ \sigma \|m^{(n+1)} - m^{(n)}\|_{C(0,T;L^s(\T))} \big).
	 \end{split}
	\end{equation*}
Here $C_1$ and $C_2$ are always independent of $n$ and $\sigma$.   Then for sufficiently small $\sigma$ we have $C_2C_1\sigma<\frac{C^*}{4}$ and 
\begin{equation*}
\begin{split}
&\|u^{(n+1)}-u^{(n)}\|_{W^{2,1}_r(Q)}+\s \|m^{(n+1)} - m^{(n)}\|_{C(0,T;L^s(\T))}\\
\leq &C_2\|u^{(n)}-u^{(n-1)}\|^2_{W^{2,1}_r(Q)}+2C_2C_1\sigma \|u^{(n)}-u^{(n-1)}\|_{W^{2,1}_r(Q)}.
\end{split}
\end{equation*}
Since $u^{(n)}$ converges in $W^{2,1}_r(Q)$ and $C_2$ is independent of $n$, we have for sufficiently large $n$, 
$$
C_2\|u^{(n)}-u^{(n-1)}\|_{W^{2,1}_r(Q)}< \frac{C^*}{2},
$$
so that

\begin{align*}
\|u^{(n+1)}&-u^{(n)}\|_{W^{2,1}_r(Q)}+ \s \|m^{(n+1)} - m^{(n)}\|_{C(0,T;L^s(\T))}\\
&	\leq C^*\|u^{(n)}-u^{(n-1)}\|_{W^{2,1}_r(Q)}
\end{align*}
\end{proof}

\begin{remark}
	Assumption \textbf{(F2)} is satisfied for example if
	\[
	F[m] =\int_\T K(x,y)m(y,t)dy,
	\]
	for some bounded kernel $K:\T\times\T\to\R$. We have
\begin{align*}	
	 \|F[m_1(t)]-F[m_2(t)]\|_{L^r(\T)}&\le C \|F[m_1(t)]-F[m_2(t)]\|_{L^\infty(\T)}\\
	 &\le C  \|K\|_{L^\infty(\T \times \T)}\|m_1( t)-m_2( t)\|_{L^s(\T)}.	
\end{align*}
If $K(x,y)=K(x-y)$, it is sufficient to assume that $K\in L^\zeta(\T)$, with $1/s+1/\zeta=1/r+1$. Indeed, by Young's convolution inequality, we have in this case
\begin{align*}
\|F[m_1(t)]-F[m_2(t)]_{C(0,T;L^s(\T))}]\|_{L^r(\T)}\\
\le \|K\|_{L^\zeta(\T\times\T )}\|m_1( t)-m_2( t)\|_{L^s(\T)}.
\end{align*}
Note that  estimate \eqref{estimate_rate} also holds for the case of a local coupling, i.e. $F=F(x,m)$, assuming  $F$ to be Lipschitz continuous in $m$, uniformly in $x$ and $r\leq s$.
\end{remark}


\begin{remark}
The results can be generalized to a  Hamiltonian $H$ dependent on $x\in\T$,
where the assumptions (H1), (H2), (H3) are replaced respectively  by 
\begin{itemize}
	\item[\bf{($\widetilde{H1}$)}] $H$ is differentiable, convex and globally Lipschitz  continuous, i.e.  there exists  a constant $R_0>0$ such that
	\begin{equation*} 
		|D_pH(x,p)|\leq R_0\qquad\text{ for all }p\in\R^d\ ,
	\end{equation*}
	uniformly in $x$.
	\item[\textbf{($\widetilde{H2}$)}] $H$ is of the form
	\begin{equation*} 
		H(x,p)=h(x)|p|^\gamma, \qquad \gamma>1,
	\end{equation*}
	where $0<h_0<h(x)<h_1$, $h_0$ and $h_1$ are two constants. 
	\item[\textbf{($\widetilde{H3}$)}]  $H$  is two times differentiable, satisfies $(\widetilde{H1})$ and for any $S>0$, there  exists $C_S>0$ such that
	\begin{equation*} 
		H_{pp}(x,p)q\cdot q\le C_S|q|^2 \quad \text{for any    $|p|\le S$, $q\in\R^d$},
	\end{equation*}
	uniformly in $x$.
\end{itemize}
When either $(\widetilde{H1})$ or $(\widetilde{H2})$ holds, the uniform boundedness of $Du$ has been shown in \cite{cG}. Moreover, either by $(\widetilde{H2})$ or by $(\widetilde{H3})$, we get \eqref{eq:conv3}.
\end{remark}

\begin{remark}
We can also generalize the results to include the case 
$$
u(x,T)=\sigma'u_T[m],
$$
where $\sigma'$ is a positive constant,  i.e. the final cost depends on  the agents state distribution, assuming the regularizing assumption 
\begin{equation}\label{reg_assumption}	
\|u_T[m_1]-u_T[m_2]\|_{W^{2-\frac{2}{r},r}(\T)} \leq C\|m_1(T)-m_2(T)\|_{L^s(\T)}.
\end{equation}
The function  $U^{n+1}=u^{(n+1)}-u^{(n)}$ satisfies the equation	
\begin{equation*}
-\pd_t U^{n+1}- \Delta U^{n+1}+q^{(n+1)} DU^{n+1}=\cF(x,t),
\end{equation*}
with $U^{n+1}(x,T)=\sigma'(u_T[m^{(n+1)}]-\sigma'u_T[m^{(n)}])$, $\cF$ defined as \eqref{cF}. Then, using Lemma \ref{lemma:linear_HJ}  and \eqref{reg_assumption} to estimate 
\begin{align*}
\|u_T[m^{(n+1)}]-u_T[m^{(n)}]\|_{W^{2-\frac{2}{r},r}(\T)} \leq &C\sigma'\|m^{(n+1)}(T)-m^{(n)}(T)\|_{L^s(\T)}\\
\leq &C\sigma'\|m^{(n+1)}-m^{(n)}\|_{C(0,T;L^s(\T))}
\end{align*}
  we will have 
\begin{equation} 
		\begin{split}
	 \|u^{(n+1)}-u^{(n)}\|_{W^{2,1}_r(Q)}
	 &\le C\big(\|u^{(n)}-u^{(n-1)}\|^2_{W^{2,1}_r(Q)}\\
	 &+ (\sigma+\sigma') \|m^{(n+1)} - m^{(n)}\|_{C(0,T;L^s(\T))}\big).
	 \end{split}
	\end{equation}
Hence, we get a linear rate of convergence    if we assume both $\sigma$ and $\sigma'$ sufficiently small.
\end{remark}

\begin{remark}
	Assume that $F$ is independent of $m$, i.e. $F[m](x)=F(x)$. In this case, by Proposition \ref{prop:est1},  we recover two well known properties  of  the policy iteration method for the Hamilton-Jacobi-Bellman equation. Firstly, by \eqref{eq:conv2}, we have
	that $U^{n+1}=u^{(n+1)}-u^{(n)}$  
	satisfies
	\[
	-\pd_t U^{n+1}- \Delta U^{n+1}+q^{(n+1)} DU^{n+1}\le 0
	\]
	with  $U^{n+1}(x,T)\equiv 0$. Therefore, by comparison principle, $U^{n+1}\le 0$, hence the policy iteration method  generates a decreasing sequence $u^{(n)}$. Moreover, by estimate \eqref{uW21r}, we get a (local) quadratic rate of convergence  for the method (a similar estimate is proved in \cite{kss} via probabilistic techniques). 
\end{remark}
\section{A rate of convergence for the policy iteration method:  the ergodic problem}\label{sec:stat}
In this section, we prove a rate of convergence for the policy iteration method for the  the ergodic MFG system
\begin{equation}\label{MFG_stat}
	\begin{cases}
		- \Delta u+H(Du)+\l=\sigma F[m] & \text{ in }\T\\
		- \Delta  m-\diver(m H_p(Du))=0 & \text{ in }\T\\
		\int_\T m(x)dx=1,\quad m\ge 0,\quad \int_\T u(x)dx=0 \ .
	\end{cases}
\end{equation}
\textbf{Policy iteration algorithm:} For fixed $R>0$ and given a bounded, measurable function $q^{(0)}$ such that $\|q^{(0)}\|_{L^\infty(\T)}\le R$,
a policy iteration method for \eqref{MFG_stat} is given by
\begin{itemize}
	\item[(i)]  Solve
	\begin{equation*}
		\left\{
		\begin{array}{ll}
			- \Delta m^{(n)}-\diver (m^{(n)} q^{(n)})=0,\quad &\text{ in }\T\\
			\int_\T m^{(n)}(x)dx=1,\quad m^{(n)}\ge 0.
		\end{array}
		\right.
	\end{equation*}
	\item[(ii)] Solve
	\begin{equation*}
		\left\{
		\begin{array}{ll}
			- \Delta u^{(n)}+q^{(n)}\cdot  Du^{(n)}-L(q^{(n)})+\l^{(n)}=\sigma F[m^{(n)}]&\text{ in }\T\\
			\int_\T u^{(n)}(x)dx=0.
		\end{array}
		\right.
	\end{equation*}
	\item[(iii)] Update the policy
	\begin{equation*}
		q^{(n+1)}(x,t)={\arg\max}_{|q|\le R}\left\{q\cdot Du^{(n)}(x)-L(q)\right\}\quad\text{ in }\T.
	\end{equation*}
\end{itemize}
In \cite[Section 4]{ccg}, the following convergence theorem   is proved
\begin{theorem}\label{thm:convergence_ergodic}
	Let either \textbf{(H1)} or \textbf{(H2)}  and  \textbf{(F1)}  be  in force and $R$ sufficiently large. Then, the sequence $(u^{(n)},\l^{(n)}, m^{(n)})$, generated by the policy iteration algorithm converges    to the solution  $(u,\l ,m)\in W^{2,r}(\T)\times\R\times   W^{1,s}(\T)$  of \eqref{MFG_stat}, uniformly in $\T$.
\end{theorem}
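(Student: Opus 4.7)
The plan is to mirror the strategy used for the finite horizon problem in Theorem \ref{thm:policy_iteration} and adapt it to the stationary setting. Concretely, I will derive a priori estimates uniform in $n$ for the three sequences $u^{(n)}$, $\l^{(n)}$ and $m^{(n)}$, extract limits by compactness, identify the limit as a solution of \eqref{MFG_stat}, and exploit the Lasry--Lions monotonicity contained in \textbf{(F1)} to pass from subsequential convergence to full-sequence convergence via uniqueness of the ergodic MFG solution.

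First I would obtain the a priori bounds. Since $|q^{(n)}|\le R$ holds by construction, the stationary Fokker--Planck equation in step (i), together with the normalization $\int_\T m^{(n)}\,dx=1$, yields (by the stationary analog of Lemma \ref{lemma:linear_FP}) a uniform bound $\|m^{(n)}\|_{W^{1,s}(\T)}\le C$. Assumption \textbf{(F1)} then forces $\s F[m^{(n)}]$ to be uniformly bounded in $C^{0,1}(\T)$. Integrating the equation in step (ii) over $\T$ gives
\begin{equation*}
	\l^{(n)}|\T| \;=\; \s\int_\T F[m^{(n)}]\,dx \;-\; \int_\T\bigl(q^{(n)}\cdot Du^{(n)}-L(q^{(n)})\bigr)\,dx,
\end{equation*}
so, combined with a Lipschitz (Bernstein-type) bound on $u^{(n)}$ coming from the equation itself, one obtains $|\l^{(n)}|\le C$. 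Classical $W^{2,r}$ elliptic regularity applied to step (ii), together with the normalization $\int_\T u^{(n)}\,dx=0$, then gives $\|u^{(n)}\|_{W^{2,r}(\T)}\le C$ for any $r>1$; in particular $Du^{(n)}$ is uniformly bounded in $C^{\a}(\T)$ for some $\a\in(0,1)$.

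Passing to the limit is then routine. By Arzel\`a--Ascoli, along a subsequence (not relabeled), $u^{(n)}\to u$ in $C^{1,\a'}(\T)$, $\l^{(n)}\to\l\in\R$, and $m^{(n)}\to m$ uniformly, preserving $\int_\T u=0$ and $\int_\T m=1$. Choosing $R$ large enough so that the uniform bound on $Du^{(n)}$ keeps the argmax in step (iii) interior to $\{|q|\le R\}$, one has $q^{(n+1)}=H_p(Du^{(n)})\to H_p(Du)=:q$ uniformly, and hence $q\cdot Du-L(q)=H(Du)$. Passing to the limit in the FP and HJB equations of the algorithm then shows that $(u,\l,m)\in W^{2,r}(\T)\times\R\times W^{1,s}(\T)$ solves \eqref{MFG_stat}.

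The hard part is upgrading this subsequential convergence to convergence of the full sequence: unlike the pure HJB case, the coupling $\s F[m^{(n)}]$ destroys the monotonicity $u^{(n+1)}\le u^{(n)}$ that usually drives such arguments. To handle this I would invoke the monotonicity in \textbf{(F1)}, which, with the normalizations $\int u=0$ and $\int m=1$, is the standard Lasry--Lions condition and yields uniqueness of the solution of \eqref{MFG_stat}. A routine subsequence argument (every subsequence admits a sub-subsequence converging to the unique solution) then promotes subsequential convergence to full convergence, in the claimed spaces and uniformly on $\T$. Finally, case \textbf{(H2)} is reduced to case \textbf{(H1)} as in the remark after Theorem \ref{thm:policy_iteration}: one first obtains a uniform $L^\infty$ bound on $Du$ for any solution of \eqref{MFG_stat} (the stationary analog of \cite[Lemma 2.4]{ccg}), replaces $H$ by the Lipschitz truncation $H_{\overline R}$ satisfying \textbf{(H1)}, and applies the preceding argument to the truncated system.
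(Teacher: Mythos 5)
The paper does not actually prove Theorem~\ref{thm:convergence_ergodic}; it cites \cite[Section 4]{ccg} and moves on, so there is no in-house argument to compare against. Evaluating your proposal on its own merits: the a priori bounds, the compactness extraction, and the use of uniqueness from \textbf{(F1)} are all sensible and correctly set up. The gap is in the sentence ``Passing to the limit in the FP and HJB equations of the algorithm then shows that $(u,\l,m)$ solves \eqref{MFG_stat},'' which hides the real difficulty.

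The difficulty is the index shift in the policy update. If you extract a subsequence $n_k$ with $u^{(n_k)}\to u$, $m^{(n_k)}\to m$, $\l^{(n_k)}\to\l$, then the FP and HJB equations you wish to pass to the limit in at step $n_k$ carry the drift $q^{(n_k)}=H_p(Du^{(n_k-1)})$, \emph{not} $H_p(Du^{(n_k)})$. Your display ``$q^{(n+1)}=H_p(Du^{(n)})\to H_p(Du)=:q$'' therefore presumes convergence of the \emph{full} sequence $u^{(n)}$, which is precisely what one is trying to prove; along the subsequence you only control $q^{(n_k+1)}$, while the equations at step $n_k$ involve $q^{(n_k)}$. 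After a further extraction $u^{(n_k-1)}\to\tilde u$, the limiting system reads $-\Delta m-\diver\bigl(m\,H_p(D\tilde u)\bigr)=0$ and $-\Delta u+H_p(D\tilde u)\cdot Du-L\bigl(H_p(D\tilde u)\bigr)+\l=\sigma F[m]$, which is \eqref{MFG_stat} only if $D\tilde u=Du$. Nothing you have written rules out, say, a period-two cycle of the policy map $q\mapsto H_p(Du_q)$ with two distinct accumulation points; in that scenario no subsequential limit solves \eqref{MFG_stat}, and the ``every subsequence has a sub-subsequence converging to the unique solution'' device has nothing to attach to. What is missing is an argument that the iteration is asymptotically regular ($\|q^{(n+1)}-q^{(n)}\|_{L^\infty}\to 0$, equivalently $Du^{(n)}-Du^{(n-1)}\to 0$) or a monotonicity/Lyapunov estimate that forbids such oscillation; this is the substantive content of \cite[Section 4]{ccg} and cannot be absorbed into the word ``routine.'' In the pure HJB case the comparison principle gives $u^{(n+1)}\le u^{(n)}$ for free, as you note; for MFG the coupling $\sigma F[m^{(n)}]$ destroys this, and you must supply a replacement rather than simply observing that the limit, if it existed and were a solution, would be unique. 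The \textbf{(H2)}-to-\textbf{(H1)} truncation and the normalization bookkeeping are fine once this identification step is repaired.
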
 
For the proof of the convergence estimate, we need a preliminary lemma. 
\begin{lemma}\label{stat lemma}
	Let $f\in W^{-1,s}(\T)=(W^{1,s'}(\T))'$, $s'=s/(s-1)$, and $q\in L^\infty(\T)$. If $M$ satisfies 
\begin{equation}\label{stationary FP2}
	\left\{
	\begin{array}{ll}
		-\Delta M-\diver (qM)=f,\quad &\text{ in } \T\\
		\int_{\T}Mdx=0,
	\end{array}
	\right.
\end{equation}	
then
	\begin{equation}\label{M estimate}
		\|M\|_{W^{1,s}(\T)} \leq C \|f\|_{W^{-1,s}(\T)}.
	\end{equation}
\end{lemma}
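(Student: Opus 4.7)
My approach is a duality bound yielding $\|M\|_{L^s(\T)}\le C\|f\|_{W^{-1,s}(\T)}$, followed by a Calderón–Zygmund bootstrap that upgrades this to the full $W^{1,s}$ estimate.

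For the duality step, I would fix $\phi\in L^{s'}(\T)$ and appeal to the stationary counterpart of Lemma \ref{lemma:linear_HJ}: with $q\in L^\infty(\T)$, there exist a unique mean-zero $u\in W^{2,s'}(\T)$ and a constant $c\in\R$ solving
\begin{equation*}
-\Delta u + q\cdot Du + c = \phi \quad\text{in }\T,
\end{equation*}
with $\|u\|_{W^{2,s'}(\T)}+|c|\le C\|\phi\|_{L^{s'}(\T)}$. Since $u\in W^{1,s'}(\T)$ is an admissible test function for $f\in W^{-1,s}(\T)$, using it in the weak formulation of \eqref{stationary FP2} and invoking $\int_\T M\,dx=0$ yields
\begin{equation*}
\int_\T M\phi\,dx = \int_\T M(-\Delta u + q\cdot Du)\,dx + c\int_\T M\,dx = \langle f, u\rangle_{W^{-1,s}\times W^{1,s'}}.
\end{equation*}
Hence $\bigl|\int_\T M\phi\,dx\bigr|\le \|f\|_{W^{-1,s}(\T)}\|u\|_{W^{1,s'}(\T)}\le C\|f\|_{W^{-1,s}(\T)}\|\phi\|_{L^{s'}(\T)}$, and taking the supremum over $\phi$ produces the desired $L^s$ bound.

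For the bootstrap, I rewrite \eqref{stationary FP2} as $-\Delta M = f+\diver(qM)$. Because $\|qM\|_{L^s(\T)}\le\|q\|_{L^\infty}\|M\|_{L^s(\T)}$, we have $\|\diver(qM)\|_{W^{-1,s}(\T)}\le C\|M\|_{L^s(\T)}$, so the classical Calderón–Zygmund estimate $\|M\|_{W^{1,s}(\T)}\le C\|{-\Delta M}\|_{W^{-1,s}(\T)}$ for mean-zero periodic functions gives
\begin{equation*}
\|M\|_{W^{1,s}(\T)}\le C\bigl(\|f\|_{W^{-1,s}(\T)} + \|M\|_{L^s(\T)}\bigr)\le C\|f\|_{W^{-1,s}(\T)}
\end{equation*}
once the duality bound is inserted. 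The main technical obstacle is the $W^{2,s'}$ a priori estimate for the stationary adjoint ergodic HJB equation with bounded measurable drift, which the excerpt only records in its parabolic form (Lemma \ref{lemma:linear_HJ}); the stationary version follows from standard elliptic Calderón–Zygmund theory on the torus combined with the Fredholm alternative, but it must be invoked explicitly here.
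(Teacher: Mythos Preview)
Your argument is correct, but it follows a genuinely different route from the paper's. Both proofs first reach the intermediate estimate $\|M\|_{W^{1,s}(\T)}\le C(\|f\|_{W^{-1,s}(\T)}+\|M\|_{L^s(\T)})$ via elliptic regularity (the paper cites \cite[Prop.~1.2.3]{bkrs}; you invoke Calder\'on--Zygmund for the periodic Laplacian). The difference lies in how the $\|M\|_{L^s}$ term is absorbed. The paper runs a compactness/contradiction argument in the spirit of Schechter: assuming the desired estimate fails along a normalized sequence, one extracts via Rellich--Kondrachov a limit $\bar M$ with $A\bar M=0$, $\int_\T\bar M=0$, and then appeals to the uniqueness result for the stationary Fokker--Planck equation in \cite{bensoussan} to force $\bar M=0$, a contradiction. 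You instead bound $\|M\|_{L^s}$ directly by duality with the adjoint ergodic problem $-\Delta u+q\cdot Du+c=\phi$. Your approach is more constructive and makes the dependence of the constant on the data more transparent, at the cost of importing the $W^{2,s'}$ well-posedness of that adjoint problem (which the paper itself uses elsewhere, in the proof of Theorem~\ref{prop:est_stat}, citing \cite[Thm.~6.1]{bensoussan}); the paper's route is softer but produces a non-explicit constant. The two black boxes---uniqueness for the FP equation versus solvability of the ergodic HJB equation---are essentially dual to one another, so neither proof is strictly more elementary.
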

\begin{proof}
From   \cite[Prop. 1.2.3]{bkrs})                                                                                                                                                
\begin{equation}
\begin{split}
	\|M\|_{W^{1,s}(\T)} \leq &C\|\diver(qM)+f\|_{W^{-1,s}(\T)}\\
	\leq &C\big(\|q\|_{L^\infty(\T)}\|M\|_{ L^s(\T)}+\|f\|_{W^{-1,s}(\T)}\big)\\
	\leq &C( \|M\|_{ L^s(\T)}+\|f\|_{W^{-1,s}(\T)}).
\end{split}
\end{equation}
We claim this leads to \eqref{M estimate}. Following the argument in \cite[pag.6]{SchechterI}, we assume by contradiction that there exists a sequence $M_k$ of solutions to \eqref{stationary FP2} such that 
	$$
	\|M_k\|_{W^{1,s}(\T)}=1,\,\,\|AM_k\|_{W^{-1,s}(\T)}\rightarrow 0,\,\,\int_{\T}M_kdx=0.
	$$
where
$$
A:=-\Delta \cdot -\diver (q\cdot).
$$
	By Rellich-Kondrachov Theorem, $W^{1,s}(\T)$ is compactly embedded in $L^s(\T)$ for $1<s<\infty $. Then from Banach-Alaoglu theorem there is a subsequence, again denoted by $M_k$, which converges weakly in $W^{1,s}(\T)$ and strongly in $L^s(\T)$. We have
	\begin{equation*}
		\|M_j-M_k\|_{W^{1,s}(\T)}\leq C(\|AM_j-AM_k\|_{W^{-1,s}(\T)}+\|M_j-M_k\|_{ L^s(\T)}).
	\end{equation*}
	Then we have that $M_k$ converges to $\bar M$ in $W^{1,s}(\T)$ and
	$$
	A\bar M=0,\,\,\,\int_{\T}\bar Mdx=0,\,\,
	$$
 By Theorem 4.2 and Lemma 4.3 in \cite{bensoussan},  the previous problem has only a trivial solution
 and therefore a contradiction since $\|\bar M\|_{W^{1,s}(\T)}=1$.
\end{proof}

\begin{theorem}\label{prop:est_stat}
	Let either  \textbf{(H2)} or \textbf{(H3)}, \textbf{(F2)}  be  in force, $r,s>d$ and $R$  as in Theorem \ref{thm:convergence_ergodic}. Then, there exists a constant $C$, depending only on the data of problem, such that, if $(u^{(n)}, \l^{(n)}, m^{(n)})$ is the sequence generated by the policy iteration method, we have
	\begin{equation}\label{st12}
			\|m^{(n+1)} - m^{(n)}\|_{W^{1,s}(\T)}\le C \|q^{(n+1)}-q^{(n)}\|_{ L^\infty (\T)},
	\end{equation}

	\begin{equation} \label{st13}
		\begin{split}
			\|u^{(n+1)}-u^{(n)}\|_{W^{2,r} (\T)}+|\l^{(n+1)}-\l^{(n)}|
			\le C\big(\|u^{(n)}-u^{(n-1)}\|^2_{W^{2,r} (\T)}\\
			+ \sigma \|m^{(n+1)} - m^{(n)}\|_{W^{1,s}(\T)}\big),
		\end{split}
	\end{equation}
\end{theorem}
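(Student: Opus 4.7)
The proof should follow the same blueprint as Theorem \ref{prop:est1}, but replacing the parabolic a priori estimates by elliptic ones and using Lemma \ref{stat lemma} in place of the energy method used for the Fokker--Planck equation on $Q$.

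For the estimate \eqref{st12}, set $M^{n+1}=m^{(n+1)}-m^{(n)}$. Subtracting the equations for $m^{(n+1)}$ and $m^{(n)}$ gives
\begin{equation*}
-\Delta M^{n+1}-\diver(q^{(n+1)} M^{n+1})=\diver\bigl((q^{(n+1)}-q^{(n)})m^{(n)}\bigr)\quad\text{in }\T,
\end{equation*}
and, since $m^{(n+1)}$ and $m^{(n)}$ are both probability densities, $\int_{\T} M^{n+1}\,dx=0$. Applying Lemma \ref{stat lemma} with $f=\diver((q^{(n+1)}-q^{(n)})m^{(n)})$, and noting that $\|\diver g\|_{W^{-1,s}(\T)}\le \|g\|_{L^s(\T)}$ together with the uniform $W^{1,s}(\T)$-bound on $m^{(n)}$ coming from Theorem \ref{thm:convergence_ergodic}, yields \eqref{st12}.

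For \eqref{st13}, introduce $U^{n+1}=u^{(n+1)}-u^{(n)}$ and $\Lambda^{n+1}=\lambda^{(n+1)}-\lambda^{(n)}$. Subtracting the HJB equations at step $n+1$ and $n$ gives
\begin{equation*}
-\Delta U^{n+1}+q^{(n+1)}\cdot DU^{n+1}+\Lambda^{n+1}=\mathcal F(x),
\end{equation*}
with $\mathcal F$ as in \eqref{cF}. Exactly the Taylor expansion around $Du^{(n-1)}$ carried out in the proof of Theorem \ref{prop:est1} shows
\begin{equation*}
\sigma(F[m^{(n+1)}]-F[m^{(n)}])-C|Du^{(n)}-Du^{(n-1)}|^2\le \mathcal F\le \sigma(F[m^{(n+1)}]-F[m^{(n)}]),
\end{equation*}
and so, using \textbf{(F2)} and the Sobolev embedding $W^{2,r}(\T)\hookrightarrow C^1(\T)$ for $r>d$, we obtain
\begin{equation*}
\|\mathcal F\|_{L^r(\T)}\le C\bigl(\|u^{(n)}-u^{(n-1)}\|^2_{W^{2,r}(\T)}+\sigma\|m^{(n+1)}-m^{(n)}\|_{L^s(\T)}\bigr).
\end{equation*}

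To separate $\Lambda^{n+1}$ from $U^{n+1}$, I will test against the invariant measure $m^{(n+1)}$: integration by parts together with the equation satisfied by $m^{(n+1)}$ gives
\begin{equation*}
\Lambda^{n+1}=\int_{\T}\mathcal F\, m^{(n+1)}\,dx,
\end{equation*}
so that $|\Lambda^{n+1}|\le \|m^{(n+1)}\|_{L^{r'}(\T)}\|\mathcal F\|_{L^r(\T)}\le C\|\mathcal F\|_{L^r(\T)}$ thanks to the uniform bound on $m^{(n+1)}$. This choice of $\Lambda^{n+1}$ is precisely the compatibility condition that makes the zero-mean problem
\begin{equation*}
-\Delta U^{n+1}+q^{(n+1)}\cdot DU^{n+1}=\mathcal F-\Lambda^{n+1},\qquad \int_{\T}U^{n+1}\,dx=0,
\end{equation*}
uniquely solvable. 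Standard elliptic $W^{2,r}$ regularity (applicable since $q^{(n+1)}\in L^\infty(\T)$ uniformly in $n$), combined with a Fredholm-type argument identical to the one used in Lemma \ref{stat lemma}, gives $\|U^{n+1}\|_{W^{2,r}(\T)}\le C\|\mathcal F-\Lambda^{n+1}\|_{L^r(\T)}\le C\|\mathcal F\|_{L^r(\T)}$, and combining with the bound on $\Lambda^{n+1}$ yields \eqref{st13}.

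The main obstacle is the handling of the eigenvalue $\Lambda^{n+1}$: contrary to the evolutive case, the HJB equation is only solvable under a compatibility condition, so one has to identify $\Lambda^{n+1}$ via duality with $m^{(n+1)}$ and then apply a Fredholm alternative in the spirit of Lemma \ref{stat lemma} to convert the resulting $L^r$-bound on the right-hand side into a $W^{2,r}$-bound on $U^{n+1}$ that is uniform in $n$.
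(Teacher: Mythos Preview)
Your proposal is correct and follows the same overall structure as the paper's proof: the estimate for $M^{n+1}$ via Lemma~\ref{stat lemma} is identical, and the treatment of $\mathcal F$ via the two-sided Taylor bound and \textbf{(F2)} is exactly as in the paper.

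The only difference concerns how the pair $(U^{n+1},\Lambda^{n+1})$ is estimated. The paper simply invokes \cite[Theorem~6.1, p.~196]{bensoussan} to obtain directly $|\Lambda^{n+1}|\le \|\mathcal F\|_{L^r(\T)}$ and $\|U^{n+1}\|_{W^{2,r}(\T)}\le C\|\mathcal F\|_{L^r(\T)}$, without isolating the eigenvalue by duality. Your route---testing against the invariant measure $m^{(n+1)}$ to identify $\Lambda^{n+1}=\int_\T \mathcal F\, m^{(n+1)}\,dx$, and then running a Fredholm/Schechter argument analogous to Lemma~\ref{stat lemma} for the zero-mean problem---is a valid and more self-contained alternative that makes transparent precisely the compatibility condition hidden inside the Bensoussan reference. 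Note, however, that the Fredholm argument a priori yields a constant depending on the specific drift $q^{(n+1)}$ rather than only on $\|q^{(n+1)}\|_{L^\infty}$; uniformity in $n$ follows (as it implicitly does in the paper's use of Lemma~\ref{stat lemma}) from the convergence $q^{(n)}\to H_p(Du)$ in $L^\infty(\T)$, and you should say so explicitly.
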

\begin{proof}
Along the proof, the constant $C$ can change from line to line, but it is always independent of $n$.\\
Set $M^{n+1}=m^{(n+1)}-m^{(n)}$. Then $M^{n+1}$ satisfies the equation
\begin{equation*}
	- \Delta M^{n+1}-\diver (q^{(n+1)} M^{n+1})=\diver ((q^{(n+1)}-q^{(n)}) m^{(n)}).
\end{equation*}
with $\int_{\T}M^{n+1}dx=0$. Hence, by \eqref{M estimate} we have
\begin{align*}
	\|M^{n+1}\|_{W^{1,s}(\T)}\leq &C\|\diver ((q^{(n+1)}-q^{(n)}) m^{(n)})\|_{W^{-1,s}(\T)}\\
	\leq &C\|(q^{(n+1)}-q^{(n)}) m^{(n)}\|_{L^s(\T)}\\
	\leq &C\|(q^{(n+1)}-q^{(n)})\|_{L^{\infty}(\T)} \|m^{(n)}\|_{L^s(\T)} \\
	\leq & C\|(q^{(n+1)}-q^{(n)})\|_{L^{\infty}(\T)}
\end{align*}
and  therefore  \eqref{st12}.\\
The couple   $U^{n+1}=u^{(n+1)}-u^{(n)}$, $\L^{n+1}=\l^{(n+1)}-\l^{(n)}$ satisfies the equation
\begin{equation*}
	- \Delta U^{n+1}+q^{(n+1)} DU^{n+1}+\L^{n+1}=\cF(x)
\end{equation*}
with $\int U^{n+1}(x)dx=0$, where 
\[ 
\cF(x)=\sigma (F[m^{(n+1)}]-  F[m^{(n)}])+q^{(n)}Du^{(n)}-L(q^{(n)})
-(q^{(n+1)}Du^{(n)}-L(q^{(n+1)})). 
\]
Exploiting the results in \cite{bensoussan} (Theorem 6.1, Pag. 196), we get
\begin{align}
	|\L^{n+1}|\le \|\cF(x)\|_{L^r(\T)}\label{st15}\\
	\|U^{n+1}\|_{W^{2,r}(\T)}	\le C\|\cF(x)\|_{L^r(\T)}\label{st16}
\end{align}	
Repeating similar computations to the one of the corresponding estimate in Theorem \ref{prop:est1}, we obtain
\begin{align*}
	\|\cF(x)\|_{L^r(\T)}\le  C \big(  \|Du^{(n)}-Du^{(n-1)}\|^2_{L^\infty(\T)}+\sigma \|F[m^{(n+1)}]-F[m^{(n)}]\|_{L^r(\T)}\big).
\end{align*}
Replacing the previous estimate in \eqref{st15}-\eqref{st16} and exploiting  \textbf{(F2)} and $r,s>d$, we get
\eqref{st13}.
\end{proof}
\begin{remark} 
	For $s=2$, estimate   \eqref{st12}  is a special case of Lemma 3.8 from \cite{all}.
\end{remark}
Arguing as in Corollary \ref{cor:rate}, we can   obtain the rate of convergence 

\begin{corollary}
	Under the same assumptions of Theorem \ref{prop:est_stat}, there exist constants $\sigma_1$ and $0<C^{**}<1$, such that for $\sigma<\sigma_1$ 
	 
	\begin{equation*}
	\begin{split}
		&\|u^{(n+1)}-u^{(n)}\|_{W^{2,r} (\T)}+|\l^{(n+1)}-\l^{(n)}|+\sigma \|m^{(n+1)} - m^{(n)}\|_{W^{1,s}(\T)}\\
		\leq &C^{**} \|u^{(n)}-u^{(n-1)}\|^2_{W^{2,r} (\T)}.
		\end{split}
			\end{equation*}
\end{corollary}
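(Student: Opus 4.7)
The plan is to mirror the absorption argument of Corollary \ref{cor:rate} in the stationary setting: combine the two estimates \eqref{st12} and \eqref{st13} of Theorem \ref{prop:est_stat} into a single recursive inequality for $a_n := \|u^{(n+1)}-u^{(n)}\|_{W^{2,r}(\T)}$, then choose $\sigma_1$ small enough to collapse it into the claimed quadratic bound.

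First I would promote the bound \eqref{st12} on $\|m^{(n+1)}-m^{(n)}\|_{W^{1,s}(\T)}$ to one in $u$. Since $r>d$, the elliptic Sobolev embedding $W^{2,r}(\T)\hookrightarrow C^1(\T)$ yields $\|Du^{(n)}-Du^{(n-1)}\|_{L^\infty(\T)}\le C\,a_{n-1}$. Using $q^{(n+1)}=H_p(Du^{(n)})$, the uniform bound on $\|Du^{(n)}\|_{L^\infty(\T)}$ from Theorem \ref{thm:convergence_ergodic}, and local Lipschitz continuity of $H_p$ under \textbf{(H2)} or \textbf{(H3)}, one gets $\|q^{(n+1)}-q^{(n)}\|_{L^\infty(\T)}\le C\,a_{n-1}$. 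Combined with \eqref{st12} this gives $\|m^{(n+1)}-m^{(n)}\|_{W^{1,s}(\T)}\le C_1\, a_{n-1}$, and plugging into \eqref{st13} produces the master inequality
\[
 a_n + |\lambda^{(n+1)}-\lambda^{(n)}| + \sigma \|m^{(n+1)}-m^{(n)}\|_{W^{1,s}(\T)} \le C_2\, a_{n-1}^2 + C_3\,\sigma\, a_{n-1}.
\]

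The key obstacle is reabsorbing the linear-in-$a_{n-1}$ cross term $C_3\sigma a_{n-1}$ into the purely quadratic $C_2 a_{n-1}^2$: Theorem \ref{thm:convergence_ergodic} only guarantees $a_{n-1}\to 0$, so for fixed positive $\sigma$ the linear term a priori dominates the quadratic one in the tail. To close the quadratic estimate I would refine the coupling contribution in \eqref{st13}: instead of bounding the source term $\sigma(F[m^{(n+1)}]-F[m^{(n)}])$ in the HJB equation for $U^{n+1}=u^{(n+1)}-u^{(n)}$ by the straightforward $\sigma\|m^{(n+1)}-m^{(n)}\|_{W^{1,s}(\T)}$, I would test the equation against the adjoint Fokker-Planck solution (in the spirit of Lemma \ref{stat lemma}) and exploit the monotonicity in \textbf{(F2)} to extract an additional factor of $\|Du^{(n)}-Du^{(n-1)}\|$, upgrading the coupling contribution to one quadratic in $a_{n-1}$. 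At that point, choosing $\sigma<\sigma_1$ sufficiently small lets $C_2$ and the refined coupling constant be merged into a single quadratic coefficient $C^{**}<1$, yielding the stated bound.
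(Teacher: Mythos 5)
Your first two paragraphs are exactly the paper's intended argument: Sobolev embedding $W^{2,r}(\T)\hookrightarrow C^1(\T)$ (using $r>d$) plus local Lipschitz continuity of $H_p$ give $\|q^{(n+1)}-q^{(n)}\|_{L^\infty(\T)}\le C\,a_{n-1}$ with $a_{n-1}=\|u^{(n)}-u^{(n-1)}\|_{W^{2,r}(\T)}$, then \eqref{st12} promotes this to $\|m^{(n+1)}-m^{(n)}\|_{W^{1,s}(\T)}\le C_1 a_{n-1}$, and substituting into \eqref{st13} gives the master inequality $a_n+|\Lambda^{n+1}|+\sigma\|M^{n+1}\|_{W^{1,s}}\le C_2 a_{n-1}^2 + C_3\sigma a_{n-1}$. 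The paper offers no more than ``Arguing as in Corollary~\ref{cor:rate}'' for this statement, and that argument is precisely the absorption you carry out next: for $\sigma$ small one hides $C_3\sigma a_{n-1}$ in part of the budget, and for $n$ large (so that $C_2 a_{n-1}$ is small) one hides the quadratic term in the rest, obtaining a \emph{linear} bound $a_n+|\Lambda^{n+1}|+\sigma\|M^{n+1}\|_{W^{1,s}}\le C^{**} a_{n-1}$.

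The obstacle you then flag is real, and you draw the correct conclusion from it: a genuinely quadratic right-hand side cannot be obtained from the master inequality, since $a_{n-1}\to 0$ makes $C_3\sigma a_{n-1}$ dominate $C_2 a_{n-1}^2$ for any fixed $\sigma>0$. However, the resolution is not the one you propose. The exponent $2$ on the right-hand side of the stated corollary is a typographical slip; the intended statement is the linear one mirroring Corollary~\ref{cor:rate}, as confirmed both by the paper's one-line ``Arguing as in Corollary~\ref{cor:rate}'' and by the fact that even in the finite-horizon case the paper only asserts a linear contraction. (The statement also tacitly carries the qualifier ``for $n$ sufficiently large,'' as in Corollary~\ref{cor:rate}.) Your suggested refinement -- testing against the adjoint Fokker--Planck solution and ``exploiting the monotonicity in \textbf{(F2)}'' -- cannot rescue the quadratic claim: \textbf{(F2)} is a Lipschitz hypothesis with no monotonicity content (monotonicity is in \textbf{(F1)}, which is \emph{not} assumed in Theorem~\ref{prop:est_stat}); the standard MFG duality pairing controls a sign, not an $L^r$ norm of the source, and does not produce an additional factor of $\|Du^{(n)}-Du^{(n-1)}\|$; and if such an upgrade were available, the identical argument would yield a quadratic rate in the time-dependent case as well, contradicting the linear bound the paper itself proves in Corollary~\ref{cor:rate}. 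So: stop at your master inequality, absorb as you did, and conclude the linear estimate -- that is the correct statement and the paper's actual proof.
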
 


\section{An interpretation of the policy iteration method for the MFG system} \label{sec:Newton_MFG}
	 The following computations only hold at a formal level and, for simplicity, we assume that   $F$ is local coupling.  Define the map
	 $$\cF:(u,m)\to
	 \left( 
	 \begin{array}{l}
	 	-\partial_tu-\Delta u+H(Du)-\sigma F(m)\\
	 	\partial_tm -\Delta  m-\diver(mH_p(Du))\\
	 	u(T)-u_T(x)\\
	 	m(0)-m_0(x)
	 \end{array}
 \right).
 $$	
 Then system \eqref{MFG} is equivalent to find the roots of $\cF$ and the corresponding Newton's iterations
 can be written as
 \begin{equation}\label{eq:Newton_iter}
 	J\cF(u^{(n-1)},m^{(n-1)})((u^{(n)},m^{(n)})-(u^{(n-1)},m^{(n-1)}))=- \cF(u^{(n-1)},m^{(n-1)}).
 \end{equation}
 The Jacobian  of $\cF$ is given by
 \[
 J\cF(u,m)(\cdot,\cdot)=\left( 
 \begin{array}{lr}
 	-\partial_t\cdot -\Delta \cdot+H_p(Du)\cdot\quad &-\sigma F'(m)\cdot\\[4pt]
     -\diver (mH_{pp}D\cdot)                      	&\partial_t\cdot -\Delta  \cdot-\diver(H_p(Du)\cdot)\\
 	\cdot|_{t=T}-u_T(x)&0\\
 	0&\cdot|_{t=0}-m_0(x)
 \end{array}
 \right).
 \]
where $F'=\frac{dF}{dm}$. 
Replacing in \eqref{eq:Newton_iter}, we obtain for the first component of \eqref{eq:Newton_iter}
 \begin{align*}
 	&-\pd_t(u^{(n)}-u^{(n-1)})-\Delta(u^{(n)}-u^{(n-1)})+H_p(Du^{(n-1)})(u^{(n)}-u^{(n-1)})\\
 	&-\sigma F'(m^{(n-1)})(m^{(n)}-m^{(n-1)})\\
	&=\partial_tu^{(n-1)}+\Delta u^{(n-1)}-H(Du^{(n-1)})+\sigma F(m^{(n-1)})
 \end{align*}
Now recalling that $q^{(n)}=H_p(Du^{(n-1)})$ and $H(Du^{(n-1)})=q^{(n)}Du^{(n-1)}-L(q^{(n)})$, the previous equation is equivalent to
\begin{equation}\label{eq:Newton_1cond}
	\begin{split}
&-\pd_t u^{(n)}- \Delta u^{(n)}+q^{(n)} Du^{(n)}-L(q^{(n)})=\sigma F(m^{(n-1)})\\
&+\sigma F'(m^{(n-1)})(m^{(n)}-m^{(n-1)})
	\end{split}	
\end{equation}
with the final condition $u^{(n)}(x,T)=u_T(x)$. Note that, if $F=F(x)$ and therefore $F'\equiv 0$, we see by 
\eqref{eq:Newton_1cond}  that the policy iteration method is a Newton's method applied to the HJB equation.
 With similar computation we get 
\begin{equation}\label{eq:Newton_2cond}
	\begin{split}
		&\pd_t m^{(n)}-\Delta m^{(n)}-\diver (m^{(n)} q^{(n)})\\
		&=\diver (m^{(n-1)}H_{pp}(Du^{(n-1)})(m^{(n)}-m^{(n-1)}))
	\end{split}	
\end{equation}
with the initial condition $m^{(n)}(x,0)=m_0(x)$. \par
By Theorem \ref{thm:policy_iteration}, the terms on the right side of \eqref{eq:Newton_1cond}-\eqref{eq:Newton_2cond}, which correspond to the off-diagonal entries of the Jacobian $J\cF$,  are infinitesimal. In the policy iteration method, we  suppress   these terms from the beginning, in order to remove the coupling between the two equations. In this sense, the policy iteration method can be interpreted as a quasi-Newton method since, instead of the full Jacobian of $\cF$, we only use an approximation of it. In any case, after some  iterations, the influence of the neglected 
terms  is vanishing and the  policy iteration method approximately behaves as   a Newton's method,
explaining the rapid convergence observed experimentally (see \cite[Section 6]{ccg}).\par
Some numerical examples have been considered in \cite{lst} for comparing the policy iteration method and the Newton method for solving MFGs. In many of these the policy iteration method turns out to be more efficient in terms of computing time.


\begin{flushright}
	\noindent \verb"fabio.camilli@uniroma1.it"\\
	SBAI, Sapienza Universit\`{a} di Roma\\
	  Roma (Italy)	
\end{flushright}

\begin{flushright}
	\noindent \verb"tangqingthomas@gmail.com"\\
	China University of Geosciences (Wuhan)\\
	Wuhan (China)	
\end{flushright}

\end{document}